\documentclass[12pt]{article}

\usepackage{graphicx}
\usepackage{comment}
\usepackage{mathrsfs}

\usepackage{nameref}
\usepackage{blkarray}
\usepackage[shortlabels,inline]{enumitem}
\usepackage{amsmath}
\usepackage{amssymb}
\usepackage{empheq}
\usepackage{tikz}

\usepackage[shortlabels]{enumitem}
\setlist[enumerate]{nosep}
\usepackage[doc]{optional}
\usepackage{xcolor}
\usepackage[colorlinks=true,
            linkcolor=refkey,
            urlcolor=lblue,
            citecolor=red]{hyperref}
\usepackage{float}
\usepackage{soul}
\usepackage{graphicx}
\definecolor{labelkey}{rgb}{0,0.08,0.45}
\definecolor{refkey}{rgb}{0,0.6,0.0}
\definecolor{Brown}{rgb}{0.45,0.0,0.05}
\definecolor{lime}{rgb}{0.00,0.8,0.0}
\definecolor{lblue}{rgb}{0.5,0.5,0.99}

\usepackage{mathpazo}
\usepackage{blkarray}
\usepackage{subcaption}
\colorlet{hlcyan}{cyan!30}

\usepackage{stmaryrd}
\usepackage{amssymb}
\oddsidemargin -0.1cm
\textwidth  16.5cm
\topmargin  -0.1cm
\headheight 0.0cm
\textheight 21.2cm
\parindent  4mm
\parskip    10pt 
\tolerance  3000

\hyphenation{non-empty}

\makeatletter
\def\namedlabel#1#2{\begingroup
   \def\@currentlabel{#2}%
   \label{#1}\endgroup
}
\makeatother


\newcommand{\R}{\mathbb{R}}

\renewcommand{\implies}{\Rightarrow}
\renewcommand{\iff}{\Leftrightarrow}

  \newcommand{\eps}{\varepsilon}
  \renewcommand{\epsilon}{\varepsilon}
  \renewcommand{\phi}{\varphi}

  \renewcommand{\ker}{\operatorname{\mathrm{ker}}}

  \newcommand{\Id}{\mathrm{Id}}

  \DeclareMathOperator{\co}{\mathrm{co}}

  \newcommand{\norm}[1]{\|#1\|}


  \def\llangleT{\langle\kern-3.5pt\langle}
  \def\rrangleT{\rangle\kern-3.5pt\rangle}

  \makeatletter
  \def \weaktostar@sym{\setbox0=\hbox{$\rightharpoonup$}\rlap{\hbox 
  to\wd0{\hss\raise1ex\hbox{$\scriptscriptstyle{*\,}$}\hss}}\box0}
  \def \weaktostar    {\mathrel{\weaktostar@sym}}
  \makeatother

  \def\gap{\mathcal{G}}

  \DeclareMathOperator{\dist}{dist}

  \def\DerivConeSym{V}
  \def\DerivConeDSym{W}
\newcommand{\DerivCone}[1][\relax]{\DerivConeSym\ifx\relax#1\relax\else(#1)\fi}
\newcommand{\DerivConePrime}[1][\relax]{\tilde\DerivConeSym\ifx\relax#1\relax\else(#1)\fi}
\newcommand{\DerivConeX}[2][]{\DerivConeSym_{#2}\ifx\relax#1\relax\else(#1)\fi}

\newcommand{\DerivConeDXt}[3][]{\DerivConeDSym_{#3}^{#2}\ifx\relax#1\relax\else(#1)\fi}
\newcommand{\DerivConeDXtp}[3][]{\DerivConeDSym_{#3}^{\circ,#2}\ifx\relax#1\relax\else(#1)\fi}

%

%
 

%

\newcommand{\localRhoX}[1][]{\if\relax\detokenize{#1}\relax r_x\else r_{x,#1}\fi}
\newcommand{\localRhoY}[1][]{\if\relax\detokenize{#1}\relax r_y\else r_{y,#1}\fi}

\def\weaklim{\mathop{\operatorname{w-\kern.07em lim}\,}}
\def\weaklimsup{\mathop{\operatorname{w-\kern.07em lim\,sup}\,}}
\def\weakliminf{\mathop{\operatorname{w-\kern.07em lim\,inf}\,}}
\def\weakstarlim{\mathop{\operatorname{w-\!\ast\!-\kern.07em lim}\,}}
\def\weakstarlimsup{\mathop{\operatorname{w-\!\ast\!-\kern.07em lim\,sup}\,}}
\def\weakstarliminf{\mathop{\operatorname{w-\!\ast\!-\kern.07em lim\,inf}\,}}

\oddsidemargin -0.1cm
\textwidth  16.5cm
\topmargin  -0.1cm
\headheight 0.0cm
\textheight 21.2cm
\parindent  4mm
\parskip    10pt 
\tolerance  3000




\newcommand{\menge}[2]{\big\{{#1}~\big |~{#2}\big\}}

\newcommand{\fenv}[1]%
{\ensuremath{\,\overrightarrow{\operatorname{env}}_{#1}}}
\newcommand{\benv}[1]%
{\ensuremath{\,\overleftarrow{\operatorname{env}}_{#1}}}











\DeclarePairedDelimiterX\set[2]{ \{ }{ \}_{#2} }{#1}

\DeclarePairedDelimiterX\rb[1]{ ( }{ ) }{#1}

{\begin{list}{}{%
\settowidth{\labelwidth}{\textrm{#1~}}%
\setlength{\leftmargin}{\labelwidth+\labelsep}}}
{\end{list}}
\usepackage{amsthm}
\usepackage[capitalize,nameinlink]{cleveref}
\crefname{equation}{}{equations}
\crefname{chapter}{Appendix}{chapters}
\crefname{item}{}{items}
\crefname{enumi}{}{}

\theoremstyle{definition}
\newtheorem{theorem}{Theorem}[section]
\newtheorem{lemma}[theorem]{Lemma}

\newtheorem{corollary}[theorem]{Corollary}

\newtheorem{definition}[theorem]{Definition}




\newtheorem{remark}[theorem]{Remark}

\newtheorem{conjecture}[theorem]{Conjecture}



\usepackage{multirow}

\usepackage[customcolors]{hf-tikz}
\usetikzlibrary{calc, intersections}
\usepackage{sidecap}
 \usepgflibrary{shapes.multipart}
 \usetikzlibrary{decorations.pathreplacing}
\usetikzlibrary{fadings}
\usepgflibrary{decorations.text}

\tikzset{style green/.style={
    set fill color=green!50!lime!60,
    set border color=white,
  },
  style cyan/.style={
    set fill color=cyan!90!blue!60,
    set border color=white,
  },
  style gray/.style={
    set fill
    color=black!10!,
    set border color=white,
  },
  style orange/.style={
    set fill color=orange!80!red!60,
    set border color=white,
  },
  hor/.style={
    above left offset={-0.15,0.31},
    below right offset={0.15,-0.125},
    #1
  },
  ver/.style={
    above left offset={-0.1,0.3},
    below right offset={0.15,-0.15},
    #1
  }
}

\newcommand{\boxedeqn}[1]{%
    \begin{align}\fbox{%
        \addtolength{\linewidth}{-2\fboxsep}%
        \addtolength{\linewidth}{-2\fboxrule}%
        \begin{minipage}{\linewidth}%
        \begin{equation}#1\\begin{align}+5mm]\end{equation}%
        \end{minipage}%
      }\end{align}%
  }



\providecommand{\RA}{\Rightarrow}

\providecommand{\Id}{\operatorname{{ Id}}}

\usepackage{algorithm,algorithmic} 
\usepackage{hyperref}
\definecolor{myblue}{rgb}{.8, .8, 1}

\newcommand*\circled[2][1.6]{\tikz[baseline=(char.base)]{
    \node[shape=circle, draw, inner sep=1pt, 
        minimum height={\f@size*#1},] (char) {\vphantom{WAH1g}#2};}}
\makeatother

\allowdisplaybreaks 

\newtheorem{assumption}[theorem]{Assumption}

\newcommand{\PA}{P_A}
\newcommand{\PB}{P_B}
\renewcommand{\RA}{R_A}

\renewcommand{\norm}[1]{\left\lVert #1 \right\rVert}
\renewcommand{\gap}{g}

\usepackage[T1]{fontenc}

\hypersetup{
  pdftitle={The Flow-Limit of Reflect-Reflect-Relax: Existence, Stability, and Discrete-Time Behavior},
  pdfauthor={Manish Krishan Lal}
}

\begin{document}

\title{\textsc{The Flow-Limit of Reflect-Reflect-Relax: Existence, Stability, and Discrete-Time Behavior}}

\author{
Manish\ Krishan Lal\thanks{
                 Mathematics, 	
Technical University of Munich,\ Garching, 85748, Germany.
                 E-mail: \texttt{manish.krishanlal@tum.de}.}
                 }

\date{\today} 
\maketitle

\begin{abstract} \noindent
We study the Reflect–Reflect–Relax (RRR) algorithm in its small–step (flow–limit)
regime. In the smooth transversal setting, we show that the transverse dynamics
form a \emph{hyperbolic sink}, yielding \emph{exponential decay} of a natural gap
measure. Under uniform geometric assumptions, we construct a
\emph{tubular neighborhood} of the feasible manifold on which the squared gap
defines a \emph{strict Lyapunov function}, excluding recurrent dynamics and
\emph{chaotic behavior} within this basin.

In the discrete setting, the induced flow is piecewise constant on
\emph{W-domains} and supports \emph{Filippov sliding} along convergent boundaries,
leading to finite-time capture into a solution domain. We prove that small-step
RRR is a \emph{forward–Euler discretization} of this flow, so that solution times
measured in rescaled units converge to a finite limit while iteration counts
diverge, explaining the \emph{emergence of iteration-optimal relaxation
parameters}. Finally, we introduce a \emph{heuristic mesoscopic framework} based
on \emph{percolation} and \emph{renormalization group} to organize performance
deterioration near the Douglas–Rachford limit.

\end{abstract}

{
\noindent
{\bfseries 2020 Mathematics Subject Classification:}
Primary 37N40; Secondary 49J53, 90C30, 65K10.

\noindent {\bfseries Keywords:}
Reflect-Reflect-Relax, flow limit, nonconvex feasibility, dynamical systems,
Filippov sliding, Lyapunov stability
}

\section{Introduction}

Projection and reflection methods such as Douglas Rachford splitting and the
Reflect-Reflect-Relax (RRR) algorithm have proved effective for feasibility
problems involving nonconvex and even discrete constraints. These methods are
used in a wide range of applications, including combinatorial design problems,
constraint satisfaction, and learning formulations cast as feasibility problems \cite{elser2021learning}.
Despite their empirical success, the mechanisms governing their convergence and
their dependence on algorithmic parameters remain only partially understood.

Motivated by extensive numerical experiments, Elser proposed a flow limit
viewpoint for RRR, in which progress is measured not in raw iteration counts but
in rescaled time units of the form $t = k\eps$, where $\eps>0$ denotes the step
size and $x_k$ the $k$th iterate. From this viewpoint, a robust empirical
phenomenon emerges: as $\eps$ decreases, the solution time measured in rescaled
units often stabilizes to a finite value, while the number of iterations required
to reach a solution increases and exhibits a problem dependent optimal choice of
$\eps$. This behavior is observed across a range of nonconvex feasibility
problems, including those studied in \cite{elser2025solving}.

The purpose of this paper is to provide a rigorous mathematical foundation for
this small step flow picture. We focus on two regimes that arise naturally in
practice. The first is a smooth regime, in which the constraint sets are smooth
manifolds intersecting transversally. The second is a discrete regime, in which
the constraint sets are finite and the induced dynamics exhibit sharp switching
behavior. In both cases, we interpret RRR as a discretization of an underlying
continuous time system and analyze the structure of the resulting flow. To this end, we define the RRR flow field
\begin{equation*}
v(x) := \PB(\RA x) - \PA x, \qquad \RA := 2\PA - \Id,
\end{equation*}
and study the continuous dynamics $\dot x = v(x)$ together with its small step
discretization
\[
x_{k+1} = x_k + \eps\,v(x_k).
\]
This formulation allows us to treat smooth and nonsmooth behavior within a
single framework and to make precise comparisons between continuous trajectories
and discrete iterates.

\paragraph{Contributions and scope.}
All of our rigorous results are established under local assumptions. In the
smooth setting, they are proved within a positively invariant tubular
neighborhood of the feasible manifold on which the metric projections are
single valued. In the discrete setting, they are proved along specified chains
of convergent W-domain boundaries. Within this framework, we establish the
following results.

\begin{enumerate}[(i)]
  \item \textbf{Transverse stability from the flow linearization.}
In the smooth transversal regime, we use the explicit linearization of the RRR
flow in terms of tangent space projectors and prove that the transverse dynamics
form a hyperbolic sink, with exponential decay rates determined by principal
angles (Lemma~\ref{lem:jacobian}, Theorem~\ref{thm:smooth}).
 \item \textbf{Piecewise constant flow and sliding capture.}
  For finite constraint sets, the induced flow is piecewise constant on
  W-domains. Along two cell boundaries satisfying a convergent normal condition,
  the associated Filippov sliding dynamics drive trajectories into a solution
  cell in finite time
  (Theorem~\ref{thm:discrete}).

  \item \textbf{Euler approximation of the flow.}
  Small step RRR is an $O(\eps)$ accurate forward Euler discretization of the
  continuous flow on finite time intervals, including across sliding segments,
  yielding $O(\eps)$ errors in trajectories
  (Theorem~\ref{thm:euler}).

  \item \textbf{Flow limit behavior of solution times.}
  In the feasible case, the continuous solution time to reach a fixed gap
  threshold is finite, and the discrete solution time converges to this limit as
  $\eps\downarrow 0$, while the corresponding iteration count diverges like
  $1/\eps$
  (Theorems~\ref{thm:hitting} and~\ref{thm:quant}).

  \item \textbf{Lyapunov structure and absence of recurrence.}
  Under uniform geometric assumptions, the squared gap defines a strict
  Lyapunov function on a tubular neighborhood of the feasible manifold,
  excluding nontrivial recurrence and chaotic dynamics within this basin
  (Theorem~\ref{thm:tube-Lyap}, Corollary~\ref{cor:no-recurrence}).
\end{enumerate}

Beyond these local results, we introduce a formal but heuristic mesoscopic
framework based on percolation and renormalization group to organize the
empirical deterioration of RRR performance as the relaxation parameter
approaches the Douglas Rachford limit. We do not claim global convergence or a complete description of RRR dynamics for
arbitrary step sizes. Instead, our results provide a rigorous foundation for the
small step regime of the flow limit conjecture and for the absence of chaotic
behavior near feasible solutions.

\section{Preliminaries}
\subsection{RRR and the flow field}
Let $A,B\subset\R^m$ be closed sets with (possibly multivalued) metric projections 
$P_A,P_B$. The RRR update with step $\eps>0$ is
\begin{equation}\label{eq:rrr}
x_{k+1} \;=\; x_k \;+\; \eps\,\bigl(\PB(\RA x_k)-\PA x_k\bigr).
\end{equation}
where $\RA \;=\; 2\PA - \Id$ is the reflector. Formally, as $\eps\to 0$ and $t=k\eps$, the piecewise-linear interpolation of 
$\{x_k\}$ approaches the solution of the ODE
\begin{equation}\label{eq:flow}
\dot x \;=\; v(x)\;=\; \PB(\RA x)-\PA x.
\end{equation}
We call $v$ the \emph{RRR flow field}. A point $x$ is a (RRR) fixed point of the flow 
if $v(x)=0$. We write the \emph{gap}
\begin{equation}\label{eq:gap}
    \gap(x) \;:=\; \norm{v(x)},
\end{equation}
as our basic measure of progress; in the smooth setting, $\gap(x)$ is 
equivalent to the transverse distance from $x$ to $A\cap B$ in a neighborhood of 
a transversal intersection.

\subsection{Regularity classes}
We will use two basic regimes:

\begin{itemize}
  \item \textbf{(S1) Smooth transverse case.}  
  $A$ and $B$ are $C^2$ embedded submanifolds of $\R^m$ that intersect 
  transversally at $x^*\in A\cap B$. In a sufficiently small neighborhood $U$ of 
  $x^*$ the metric projections $P_A$ and $P_B$ are single-valued and $C^1$, and 
  their derivatives at $x^*$ are the orthogonal projectors $p_A,p_B$ onto the 
  tangent spaces $\mathcal{T}_{x^*}A,\mathcal{T}_{x^*}B$ respectively. The intersection 
  $M:=A\cap B$ is then a $C^2$ submanifold, and $P_A,P_B$ restrict to the identity 
  on $M$ near $x^*$.
  \item \textbf{(S2) Discrete W-domain case.}  
  $A$ and $B$ are finite subsets of $\R^m$. Then $P_A$ and $P_B$ are single-valued 
  everywhere (nearest neighbors are unique except on a set of measure zero), and 
  the induced velocity field $v$ is piecewise constant on a finite partition of 
  \emph{W-domains}. For more general stratified sets, one typically 
  obtains a piecewise $C^1$ velocity field; here we focus on the finite case where 
  the geometry is purely combinatorial.
\end{itemize}

\subsection{Principal angles and the Jacobian at a regular point}
In the smooth transverse case (S1), the linearization of the RRR flow at a feasible point admits an explicit expression in terms of the orthogonal projectors onto the
tangent spaces of the constraint manifolds. Let $p_A$ and $p_B$ denote the orthogonal 
projectors onto the tangent spaces $\mathcal{T}_{x^*}A$ and $\mathcal{T}_{x^*}B$ at a transversal 
intersection $x^*\in A\cap B$. The direct sum $\mathcal{T}_{x^*}A+\mathcal{T}_{x^*}B$ admits an 
orthogonal decomposition into principal-angle planes and the common tangent 
subspace $\mathcal{T}_{x^*}A\cap \mathcal{T}_{x^*}B$. On each principal-angle plane the angle between 
$\mathcal{T}_{x^*}A$ and $\mathcal{T}_{x^*}B$ is some $\theta\in(0,\tfrac{\pi}{2}]$.

\begin{lemma}[Linearization of the flow at a feasible point]\label{lem:jacobian}
Assume \textnormal{(S1)} and let $x^*\in A\cap B$ be a transversal intersection. 
Then $x^*$ is an equilibrium of the flow~\eqref{eq:flow}, and its Jacobian is
\begin{equation}\label{eq:jac}
Dv(x^*) \;=\; D\PB(x^*)\,D\RA(x^*) - D\PA(x^*) 
\;=\; \bigl(2p_Bp_A - p_B - p_A\bigr).
\end{equation}
Moreover:
\begin{enumerate}[(i)]
  \item\label{lem:jacobian1} On the intersection $\mathcal{T}_{x^*}A\cap \mathcal{T}_{x^*}B$ we have $Dv(x^*)=0$.
  \item\label{lem:jacobian2} On each principal-angle $2$-plane with angle $\theta\in(0,\frac{\pi}{2}]$, 
  there exists an orthonormal basis in which
  \[
  p_A=\begin{bmatrix}1&0\\0&0\end{bmatrix},\qquad
  p_B=\begin{bmatrix}\cos^2\theta&\cos\theta\sin\theta\\[2pt]
                     \cos\theta\sin\theta&\sin^2\theta\end{bmatrix},
  \]
  and in this basis the restriction of $Dv(x^*)$ is
  \[
  r(\theta) \;=\; 
  \begin{bmatrix}
    -\sin^2\theta & -\sin\theta\cos\theta\\[2pt]
    \sin\theta\cos\theta & -\sin^2\theta
  \end{bmatrix}.
  \]
  The eigenvalues of $r(\theta)$ are $-\sin^2\theta\pm i\,\sin\theta\cos\theta$, 
  and its symmetric part equals $-\sin^2\theta\,I_2$.
\end{enumerate}
\end{lemma}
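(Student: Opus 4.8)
The plan is to verify the equilibrium claim, establish the formula for $Dv(x^*)$, and then reduce everything to the $2\times 2$ computation in the principal-angle basis. First I would note that since $x^*\in M = A\cap B$ and (S1) guarantees $P_A,P_B$ restrict to the identity on $M$ near $x^*$, we have $P_A x^* = x^*$, hence $R_A x^* = 2x^* - x^* = x^*$, and $P_B(R_A x^*) = P_B x^* = x^*$; therefore $v(x^*) = P_B(R_A x^*) - P_A x^* = 0$, so $x^*$ is an equilibrium of \eqref{eq:flow}.

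\textbf{The Jacobian formula.} Next, differentiate $v(x) = P_B(R_A x) - P_A x$ at $x^*$ by the chain rule: $Dv(x^*) = DP_B(R_A x^*)\,DR_A(x^*) - DP_A(x^*)$. Since $R_A x^* = x^*$ and $R_A = 2P_A - \Id$, we have $DR_A(x^*) = 2DP_A(x^*) - \Id = 2p_A - I$; and (S1) tells us $DP_A(x^*) = p_A$, $DP_B(x^*) = p_B$. Substituting, $Dv(x^*) = p_B(2p_A - I) - p_A = 2p_Bp_A - p_B - p_A$, which is \eqref{eq:jac}. For part~\eqref{lem:jacobian1}: on $\mathcal T_{x^*}A\cap\mathcal T_{x^*}B$ both projectors act as the identity, so $2p_Bp_A - p_B - p_A$ applied to such a vector gives $2v - v - v = 0$.

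\textbf{Reduction to principal-angle planes.} For part~\eqref{lem:jacobian2} I would invoke the standard simultaneous block-diagonalization of a pair of orthogonal projectors (Halmos's two-subspaces theorem / CS decomposition): $\R^m$ decomposes orthogonally into the four "trivial" pieces ($\mathcal T_{x^*}A\cap\mathcal T_{x^*}B$, $\mathcal T_{x^*}A\cap(\mathcal T_{x^*}B)^\perp$, etc.) and a family of $2$-dimensional planes on each of which $p_A,p_B$ simultaneously take the stated normal form for some $\theta\in(0,\tfrac\pi2]$. Since $Dv(x^*)$ is a polynomial in $p_A,p_B$, it respects this decomposition, so it suffices to compute the restriction on one such plane. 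That is the routine matrix calculation: with $p_A = \begin{bmatrix}1&0\\0&0\end{bmatrix}$ and $p_B = \begin{bmatrix}\cos^2\theta&\cos\theta\sin\theta\\ \cos\theta\sin\theta&\sin^2\theta\end{bmatrix}$, one computes $p_Bp_A = \begin{bmatrix}\cos^2\theta&0\\ \cos\theta\sin\theta&0\end{bmatrix}$, then $2p_Bp_A - p_B - p_A = \begin{bmatrix}2\cos^2\theta - \cos^2\theta - 1 & -\cos\theta\sin\theta\\ 2\cos\theta\sin\theta - \cos\theta\sin\theta & -\sin^2\theta\end{bmatrix} = \begin{bmatrix}-\sin^2\theta & -\sin\theta\cos\theta\\ \sin\theta\cos\theta & -\sin^2\theta\end{bmatrix} = r(\theta)$, using $\cos^2\theta - 1 = -\sin^2\theta$. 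The trace is $-2\sin^2\theta$ and the determinant is $\sin^4\theta + \sin^2\theta\cos^2\theta = \sin^2\theta$, so the eigenvalues are $-\sin^2\theta \pm i\sin\theta\cos\theta$ (check: discriminant $4\sin^4\theta - 4\sin^2\theta = -4\sin^2\theta\cos^2\theta$); and the symmetric part $\tfrac12(r(\theta) + r(\theta)^\top)$ kills the off-diagonal skew part, leaving $-\sin^2\theta\, I_2$.

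\textbf{Main obstacle.} Almost all of the work is routine once the structural input is in place; the one genuine point requiring care is the justification of the simultaneous normal form for $(p_A,p_B)$ and the claim that $\mathcal T_{x^*}A + \mathcal T_{x^*}B$ decomposes orthogonally into principal-angle $2$-planes plus the common subspace — i.e. citing or reproving the CS/Halmos decomposition and checking that it is exactly this decomposition that is meant in the paragraph preceding the lemma. I would cite a standard reference for the two-projections theorem rather than reprove it. A minor secondary point is confirming that the hypothesis (S1) genuinely gives $DP_A(x^*) = p_A$ (single-valuedness and $C^1$-smoothness of the projection near a point of the manifold, with derivative the tangential projector) — this is standard for $C^2$ submanifolds via the tubular neighborhood theorem and is already asserted in (S1), so it can be taken as given.
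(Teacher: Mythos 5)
Your proof is correct and follows essentially the same route as the paper's: verify $v(x^*)=0$ from $P_Ax^*=P_Bx^*=x^*$, apply the chain rule with $DP_A(x^*)=p_A$, $DP_B(x^*)=p_B$, $DR_A(x^*)=2p_A-I$, and then compute $r(\theta)$ by direct multiplication in the principal-angle basis (the paper likewise treats the Halmos/CS two-subspace decomposition as "standard principal-angle geometry"). Your explicit trace/determinant check of the eigenvalues and your remark that $Dv(x^*)$, being a polynomial in $p_A,p_B$, respects the block decomposition are slightly more careful than the paper's one-line justification, but the argument is the same.
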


\begin{proof}
Since $P_A(x^*)=P_B(x^*)=x^*$, we have $\RA(x^*)=x^*$ and hence $v(x^*)=0$. On $U$ 
the chain rule gives $D\RA(x^*)=2DP_A(x^*)-I=2p_A-I$, $D\PB(x^*)=p_B$, and 
$D\PA(x^*)=p_A$, which yields~\eqref{eq:jac}. \ref{lem:jacobian1} follows from 
$p_Az=p_Bz=z$ for $z\in \mathcal{T}_{x^*}A\cap \mathcal{T}_{x^*}B$. The block decomposition in \ref{lem:jacobian2} is 
standard principal-angle geometry: in the stated basis one computes $r(\theta)$ by 
direct multiplication of $2p_Bp_A-p_B-p_A$, and the eigenvalues and symmetric part 
follow from a short calculation.
\end{proof}

Thus $M=A\cap B$ is a manifold of equilibria, and on the transverse complement of 
$\mathcal{T}_{x^*}M$ the linearization has eigenvalues with strictly negative real part, 
forming a stable spiral when $0<\theta<\frac{\pi}{2}$ and a stable node when 
$\theta=\frac{\pi}{2}$.

\subsection{W-domains and piecewise-constant fields}
\label{sec:Wdomains}
In the discrete case (S2) we assume $A$ and $B$ are finite sets. For each pair 
$(a,b)\in A\times B$ define the corresponding \emph{W-domain}
\begin{align}\label{def:Wdomain}
W(a,b) := 
\menge{x\in\R^m}{\PA(x)=a,~ \PB(\RA x)=b}.
\end{align}
Away from a set of measure zero where projections are nonunique, the family 
$\{W(a,b)\}_{(a,b)\in A\times B}$ forms a partition of $\R^m$ into finitely many 
cells. On such a cell we have
\[
v(x) \;=\; \PB(\RA x)-\PA x \;\equiv\; b-a,\qquad x\in W(a,b),
\]
so the velocity field is piecewise constant. The interfaces between W-domains are smooth codimension-one manifolds almost everywhere, given by Voronoi bisectors and their reflections, with lower-dimensional junction sets where three or more domains intersect.

\subsection{Filippov solutions and sliding}
The piecewise-constant field $v$ is discontinuous on W-domain boundaries. We use 
the notion of \emph{Filippov solutions}~\cite{Filippov1988}. For a measurable, locally bounded vector field 
$v\colon\R^m\to\R^m$, its \emph{Filippov regularization} at $x$ is the closed convex 
hull $\mathcal F(x)$ of all essential values of $v$ near $x$. A \emph{Filippov 
solution} of
\[
\dot x(t) \in \mathcal F(x(t))
\]
is an absolutely continuous curve $x(\cdot)$ satisfying the inclusion almost 
everywhere. Consider a smooth portion of a boundary $\Sigma$ separating two W-domains with 
constant velocities $v_1$ and $v_2$. Let $\Sigma=\partial W(a_1,b_1)\cap\partial W(a_2,b_2)$ be a $C^1$ two-cell
interface, and let $n$ denote the unit normal to $\Sigma$ oriented from
$W(a_1,b_1)$ into $W(a_2,b_2)$. The Filippov set on $\Sigma$ is
\[
\mathcal F(x)=\co\{v_1,v_2\},\qquad v_i:=b_i-a_i .
\]
If the \emph{convergent normal condition}
\begin{equation}\label{eq:conv-normal}
(n\!\cdot\! v_1)\,(n\!\cdot\! v_2)<0
\end{equation}
holds, there exists a unique $v_{\mathrm{slide}}\in\mathcal F(x)$ satisfying
$n\cdot v_{\mathrm{slide}}=0$. Writing
\[
v_{\mathrm{slide}}
=\alpha v_1+(1-\alpha)v_2,\qquad
\alpha=\frac{n\cdot v_2}{n\cdot (v_2-v_1)}\in(0,1),
\]
this is equivalently expressed as
\begin{equation}\label{def:vslide}
v_{\mathrm{slide}}
:=\frac{(n\cdot v_2)v_1-(n\cdot v_1)v_2}{n\cdot (v_2-v_1)}.
\end{equation}
Under \eqref{eq:conv-normal}, $\Sigma$ is locally attracting for Filippov
solutions, which evolve along $\Sigma$ with velocity $v_{\mathrm{slide}}$
\cite{Brogliato2016}. For a $C^1$ interface $\Sigma$ and $x\in\Sigma$, let
$\Pi_{\Sigma}(x)$ denote the orthogonal projector onto the tangent space
$T_x\Sigma$. We say that the sliding velocity satisfies a
\emph{uniform tangential speed bound} on $\Sigma$ if
\begin{equation}\label{eq:tang-speed}
\|\Pi_{\Sigma}(x)\,v_{\mathrm{slide}}\|\ \ge\ s_0 \ >\ 0,
\qquad \forall x\in\Sigma .
\end{equation}

\subsection{Solution time in continuous and discrete time}
Fix a tolerance $\delta>0$. For a (Carath\'eodory or Filippov) flow solution 
$x(\cdot)$ of~\eqref{eq:flow} we define the continuous \emph{hitting time}
\begin{equation}\label{eq:cont-hitting}
T^*_\delta(x_0) \;:=\; 
\inf\menge{t\ge 0}{ \gap\bigl(x(t)\bigr)\le \delta },
\end{equation}
with the convention that $T^*_\delta(x_0)=+\infty$ if the set is empty, and with
initial condition $x(0)=x_0$. We refer to $\gap(x)$ as the \emph{gap function} \cref{eq:gap}, which vanishes precisely at
RRR fixed points. For the discrete RRR iteration~\eqref{eq:rrr} with step $\eps>0$, we define the
corresponding discrete hitting time
\begin{equation}\label{eq:disc-hitting}
t^*_\delta(\eps;x_0) \;:=\; k\eps,
\end{equation}
where $k$ is the smallest index such that $\gap(x_k)\le \delta$, if such an index
exists, and $t^*_\delta(\eps;x_0)=+\infty$ otherwise. One of our main goals is to understand the dependence of
$t^*_\delta(\eps;x_0)$ on $\eps$ and its relation to the flow-limit
$T^*_\delta(x_0)$ as $\eps\downarrow 0$.

\section{Main Results}
We now state and prove our main results in the smooth transverse regime (S1) and 
the discrete W-domain regime (S2). 

\subsection{Smooth transverse case}
The local affine form of the flow suggests contracting behavior transverse to
the feasible manifold, as illustrated through low-dimensional examples
\cite[Figures~5.3-5.4]{elser2025solving}.
The following theorem makes this precise by establishing exponential decay of
the flow in the transverse directions near a transversal feasible point.

\begin{theorem}[Hyperbolic sink and exponential decay of the gap]\label{thm:smooth}
Assume \textnormal{(S1)} and let $x^*\in A\cap B$ be a transversal intersection. 
Then $x^*$ is an equilibrium of \eqref{eq:flow}, and there exist constants 
$\mu>0$, $C\ge 1$ and a neighborhood $U$ of $x^*$ such that for any solution 
$x(\cdot)$ of \eqref{eq:flow} with $x(0)\in U$,
\begin{equation}\label{eq:smooth-exp}
  \norm{v(x(t))} \;\le\; C\,e^{-\mu t}\,\norm{v(x(0))}\qquad
  \forall t\ge 0.
\end{equation}
In particular, for any $\delta>0$ and $x(0)\in U$ with $\gap(x(0))>\delta$,
\begin{equation}\label{eq:Tdelta-log}
  T^*_\delta(x(0)) \;\le\; \mu^{-1}
    \log \Bigl(\frac{C\,\norm{v(x(0))}}{\delta}\Bigr) \;<\; \infty.
\end{equation}
\end{theorem}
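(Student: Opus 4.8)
The plan is to run a direct Lyapunov estimate on the gap $\gap(x(t))=\norm{v(x(t))}$ along trajectories, using the linearization supplied by Lemma~\ref{lem:jacobian}. First observe that under (S1) the field $v=\PB\circ\RA-\PA$ is $C^1$ on the neighborhood of $x^*$ where the projections are single valued, so \eqref{eq:flow} has a unique local solution through each point, and $v(x^*)=0$ makes $x^*$ an equilibrium. Differentiating $v$ along a solution gives the identity $\tfrac{d}{dt}v(x(t))=Dv(x(t))\,v(x(t))$, hence
$\tfrac{d}{dt}\norm{v(x(t))}^2 = 2\,\bigl\langle v(x(t)),\,S(x(t))\,v(x(t))\bigr\rangle$, where $S(x):=\tfrac12\bigl(Dv(x)+Dv(x)^{\top}\bigr)$. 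The whole proof reduces to showing that on a small ball $U$ around $x^*$ one has $\langle v(x),S(x)v(x)\rangle\le-\mu\,\norm{v(x)}^2$ for a fixed $\mu>0$; Gr\"onwall then gives \eqref{eq:smooth-exp} (with $C=1$, say), and \eqref{eq:Tdelta-log} follows by solving $\norm{v(x(0))}e^{-\mu t}=\delta$.

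The one genuinely delicate point is that $M:=A\cap B$ is in general a positive-dimensional manifold of equilibria, so $x^*$ is not a hyperbolic sink in the naive sense. From Lemma~\ref{lem:jacobian}, $S(x^*)=p_Bp_A+p_Ap_B-p_B-p_A=-(p_A-p_B)^2$, which is only negative \emph{semi}definite, with kernel $\mathcal{T}_{x^*}M=\mathcal{T}_{x^*}A\cap\mathcal{T}_{x^*}B$; by part~\ref{lem:jacobian2} it is uniformly negative definite, $S(x^*)\preceq-\mu_0 I$ with $\mu_0:=\sin^2\theta_{\min}>0$ (the minimum of $\sin^2\theta$ over the principal-angle $2$-planes), only on the transverse subspace $N:=(\mathcal{T}_{x^*}M)^{\perp}$. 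A short count — $\ker Dv(x^*)=\mathcal{T}_{x^*}M$ by part~\ref{lem:jacobian1}, $Dv(x^*)^{\top}$ annihilates $\mathcal{T}_{x^*}M$, and $r(\theta)$ is invertible on each $2$-plane — identifies $N$ with the range of $Dv(x^*)$. The key observation is then that the \emph{value} $v(x)$ is almost normal to $M$ near $x^*$: since $v\equiv 0$ on $M$, writing $m(x)$ for the (single-valued, $C^1$ near $x^*$, because $M$ is $C^2$) nearest point of $M$, the fundamental theorem of calculus along the segment from $m(x)$ to $x$ gives $v(x)=\bigl(\int_0^1 Dv\bigl(m(x)+s(x-m(x))\bigr)\,ds\bigr)(x-m(x))$, with $x-m(x)$ normal to $M$ at $m(x)$; as $x\to x^*$ the averaged Jacobian tends to $Dv(x^*)$ and the normal space at $m(x)$ tends to $N$, so $\norm{(I-\Pi_N)v(x)}\le\epsilon(U)\,\norm{v(x)}$ with $\epsilon(U)\to 0$ as $U$ shrinks to $\{x^*\}$, where $\Pi_N$ is the orthogonal projector onto $N$. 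Because $(p_A-p_B)^2$ is symmetric with range $N$ and kernel $\mathcal{T}_{x^*}M$, the cross terms vanish and $\langle v(x),S(x^*)v(x)\rangle=\langle\Pi_N v(x),S(x^*)\Pi_N v(x)\rangle\le-\mu_0\bigl(1-\epsilon(U)\bigr)^2\norm{v(x)}^2$; absorbing the continuity error $\norm{S(x)-S(x^*)}=\omega(U)\to 0$ and shrinking $U$ yields the claimed inequality with, for instance, $\mu=\mu_0/2$.

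It remains to keep the trajectory inside $U$. Since $\norm{\dot x(t)}=\norm{v(x(t))}$ and the exponential bound makes this integrable, $\norm{x(t)-x(0)}\le\mu^{-1}\norm{v(x(0))}$ on any interval where the estimate is valid, while $\norm{v(x(0))}\le L\,\norm{x(0)-x^*}$ from the Lipschitz bound on $v$ on the working ball. A standard continuation argument — let $\tau$ be the first exit time from the ball on which the differential inequality holds; the two displayed bounds show $x(t)$ cannot reach its boundary, so $\tau=\infty$ — then shows that if $x(0)$ lies in a sufficiently small sub-ball $U'$ (any $r$ with $r(1+L/\mu)<\rho$ works), the trajectory never leaves the working ball, the inequality holds for all $t\ge 0$, and \eqref{eq:smooth-exp}–\eqref{eq:Tdelta-log} hold on $U'$. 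I expect the step asserting that $v(x)$ lies within $o(\norm{v(x)})$ of $N$ — quantifying that the flow field is ``almost normal'' to the equilibrium manifold — to be the crux; the rest is Gr\"onwall and bookkeeping. When $\dim M=0$ this step is vacuous ($N=\R^m$, $S(x^*)\preceq-\mu_0 I$ outright), and the statement is exactly the textbook hyperbolic-sink estimate.
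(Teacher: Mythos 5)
Your proof is correct, and it follows the paper's overall Lyapunov strategy: differentiate $E=\tfrac12\norm{v}^2$ along the flow, bound $\langle v,S(x)v\rangle\le-\mu\norm{v}^2$ via the negative definiteness of the symmetric part of the Jacobian on the transverse subspace, and close with Gr\"onwall. Where you genuinely diverge is in the key ``transverse dominance'' step, and your route is the more robust one. The paper Taylor-expands $v$ at the single point $x^*$ and bounds the remainder by $\eta\,\norm{J(x-x^*)}$; this is delicate because $J=Dv(x^*)$ annihilates $\mathcal T_{x^*}M$, so $\norm{J(x-x^*)}$ does not control $\norm{x-x^*}$ for displacements along the equilibrium manifold. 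You instead integrate $Dv$ along the normal segment from the foot point $m(x)\in M$, exploiting $v|_M\equiv 0$ and the normality of $x-m(x)$, which isolates the transverse displacement and yields $\norm{(I-\Pi_N)v(x)}\le\epsilon(U)\norm{v(x)}$ without that issue. You also work with the subspace $N=(\mathcal T_{x^*}M)^\perp$, on which $S(x^*)=-(p_A-p_B)^2\preceq-\sin^2\theta_{\min}\,I$ genuinely holds under transversality, rather than with $\mathcal T_{x^*}A+\mathcal T_{x^*}B$ (which by transversality is all of $\R^m$ and still contains the kernel $\mathcal T_{x^*}M$ of $S(x^*)$); and you supply the positive-invariance/continuation argument, via integrability of $\norm{\dot x}$ from the exponential bound, that the paper leaves implicit in ``along solutions staying in $B(x^*,r)$.'' The one step you should write out in full is the quantitative form of the ``almost normal'' estimate: from $v(x)=A_x\,(x-m(x))$ with $A_x\to J$ and $x-m(x)$ normal to $M$ at $m(x)$, you need both the upper bound $\norm{(I-\Pi_N)v(x)}\le\delta(x)\,\norm{x-m(x)}$ with $\delta(x)\to0$ and the lower bound $\norm{v(x)}\ge c\,\norm{x-m(x)}$ coming from the invertibility of $J$ restricted to $N$ (smallest singular value $\ge\min_j\sin\theta_j>0$); the second half is used silently when you divide through by $\norm{v(x)}$, but it follows from the same ingredients you already list, so this is a write-up point rather than a gap.
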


\begin{proof}
By \textnormal{(S1)}, the projections $P_A,P_B$ are single-valued and $C^1$ on a
neighborhood of $x^*$, hence $v=P_B\circ R_A-P_A$ is $C^1$ there. By
Lemma~\ref{lem:jacobian}, $v(x^*)=0$, so $x^*$ is an equilibrium of \eqref{eq:flow}.
Let $J:=Dv(x^*)$.

\emph{Step 1 (coercivity of the symmetric part on the transverse sum).}
Set $\mathcal T:=T_{x^*}A+T_{x^*}B$ and let $P_{\mathcal T}$ be the orthogonal
projector onto $\mathcal T$. By Lemma~\ref{lem:jacobian}\ref{lem:jacobian1}-\ref{lem:jacobian2},
on $\mathcal T$ the symmetric part
$S_*:=\tfrac12(J+J^\top)$ satisfies
\begin{equation}\label{eq:Sstar-coercive}
S_* \;\preceq\; -\sigma\,P_{\mathcal T},
\qquad
\sigma:=\min\{\sin^2\theta_j:\ \theta_j\in(0,\tfrac{\pi}{2}]\}>0,
\end{equation}
and $S_*=0$ on $\mathcal T^\perp$.

\emph{Step 2 (transverse dominance of $v$ near $x^*$).}
Since $v$ is $C^1$ and $v(x^*)=0$, we have
\begin{equation}\label{eq:v-taylor}
v(x)=J(x-x^*)+r(x),\qquad \frac{\|r(x)\|}{\|x-x^*\|}\to 0\ \ (x\to x^*).
\end{equation}
Moreover, $\mathrm{ran}(J)\subseteq \mathcal T$ by Lemma~\ref{lem:jacobian}
(since $J$ acts as a direct sum of blocks supported on $\mathcal T$), hence
$P_{\mathcal T}J=J$. Fix $\eta\in(0,1)$ and shrink $r>0$ so that
$\|r(x)\|\le \eta\,\|J(x-x^*)\|$ for all $\|x-x^*\|\le r$. Then for all such $x$,
\[
\|P_{\mathcal T}v(x)\|
=\|J(x-x^*)+P_{\mathcal T}r(x)\|
\ge (1-\eta)\|J(x-x^*)\|,
\qquad
\|v(x)\|\le (1+\eta)\|J(x-x^*)\|,
\]
and therefore
\begin{equation}\label{eq:transverse-dominance}
\|P_{\mathcal T}v(x)\|\ \ge\ \kappa\,\|v(x)\|,
\qquad
\kappa:=\frac{1-\eta}{1+\eta}\in(0,1),
\qquad \forall x\in B(x^*,r).
\end{equation}

\emph{Step 3 (Lyapunov inequality for $E=\tfrac12\|v\|^2$).}
Define $E(x):=\tfrac12\|v(x)\|^2$ on $B(x^*,r)$. Along any solution $x(\cdot)$ of
\eqref{eq:flow} with $x(t)\in B(x^*,r)$ we compute
\begin{equation}\label{eq:Edot}
\frac{d}{dt}E(x(t))
=\langle v(x(t)),Dv(x(t))v(x(t))\rangle
=\langle v(x(t)),S(x(t))v(x(t))\rangle,
\end{equation}
where $S(x):=\tfrac12(Dv(x)+Dv(x)^\top)$. By continuity of $Dv$ at $x^*$, after
shrinking $r$ we may assume
\[
\|S(x)-S_*\|\le \frac{\sigma\kappa^2}{4}\qquad \forall x\in B(x^*,r).
\]
Then for $x\in B(x^*,r)$,
\[
\langle v,S(x)v\rangle
\le \langle v,S_*v\rangle+\|S(x)-S_*\|\,\|v\|^2
\le -\sigma\|P_{\mathcal T}v\|^2+\frac{\sigma\kappa^2}{4}\|v\|^2
\le -\frac{\sigma\kappa^2}{2}\|v\|^2,
\]
where the second inequality uses \eqref{eq:Sstar-coercive} and the last uses
\eqref{eq:transverse-dominance}. Hence, along solutions staying in $B(x^*,r)$,
\[
\dot E(x(t)) \le -2\mu\,E(x(t)),
\qquad \mu:=\frac{\sigma\kappa^2}{2}>0.
\]
Gronwall yields $E(x(t))\le E(x(0))e^{-2\mu t}$, i.e.
\[
\|v(x(t))\|\le e^{-\mu t}\|v(x(0))\|,
\]
which proves \eqref{eq:smooth-exp} with $C=1$ (and thus also with any $C\ge 1$ after
shrinking $U$).

Finally, if $\|v(x(0))\|>\delta$, then the first hitting time
$T^*_\delta(x(0)):=\inf\{t\ge 0:\ \|v(x(t))\|\le \delta\}$ satisfies
$e^{-\mu T^*_\delta}\|v(x(0))\|\le \delta/C$, which is equivalent to
\eqref{eq:Tdelta-log}.
\end{proof}

\subsection{Discrete W-domains and sliding capture}
In the discrete regime (S2), where the constraint sets are finite, the RRR flow is
piecewise constant on a finite partition of the state space into W-domains,
with trajectories exhibiting attraction to domain boundaries
\cite[Section~5.5.4, Figure~5.6]{elser2025solving}.
The following result provides a precise formulation of this behavior using
Filippov solutions and establishes finite-time capture along convergent
boundaries.

\begin{theorem}[Piecewise-constant flow and sliding capture]\label{thm:discrete}
Assume \textnormal{(S2)} and $A\cap B\neq\varnothing$. Then:

\begin{enumerate}[(i)]
\item\label{thm:discrete1}
There exists a finite family $\{W(a,b)\}_{(a,b)\in A\times B}$ such that
\[
\R^m=\bigcup_{(a,b)} W(a,b)\ \ \text{up to a null set},
\qquad
v(x)=b-a \ \ \forall x\in W(a,b).
\]
For any $x_0\notin\bigcup_{(a,b)}\partial W(a,b)$, the Carath\'eodory solution of
\eqref{eq:flow} with $x(0)=x_0$ is unique until the first boundary hit.

\item\label{thm:discrete2}
Let $\Sigma=\partial W(a_1,b_1)\cap\partial W(a_2,b_2)$ be a $C^1$ interface with
unit normal $n$ and velocities $v_i=b_i-a_i$. If \eqref{eq:conv-normal} holds, then
there exists a unique $v_{\mathrm{slide}}\in\co\{v_1,v_2\}$ satisfying
$n\cdot v_{\mathrm{slide}}=0$, and $\Sigma$ is locally attracting for Filippov
solutions.

\item\label{thm:discrete3}
Let $\Sigma_1,\dots,\Sigma_N$ be $C^1$ interfaces with
\[
\Sigma_j\subset\partial W(a_j,b_j)\cap\partial W(a_{j+1},b_{j+1}),
\qquad
\Sigma_N\cap W(a^*,a^*)\neq\varnothing,
\]
and assume \eqref{eq:conv-normal} and \eqref{eq:tang-speed} hold on each
$\Sigma_j$. If $\mathrm{length}(\Sigma_j)=\ell_j<\infty$, then any Filippov
solution entering $\Sigma_1$ reaches $W(a^*,a^*)$ in time
\[
T_{\mathrm{cap}}\le\sum_{j=1}^N\frac{\ell_j}{s_0}.
\]
\end{enumerate}
\end{theorem}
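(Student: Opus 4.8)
The plan is to handle the three parts in sequence, as each builds on the structure set up in Sections~\ref{sec:Wdomains}. For part~\ref{thm:discrete1}, I would first invoke the hypothesis that $A,B$ are finite to note that $x \mapsto \PA(x)$ partitions $\R^m$ into Voronoi cells (single-valued off a finite union of bisecting hyperplanes, hence off a null set), and similarly $x\mapsto \PB(\RA x)$ is single-valued off the preimage under the affine map $\RA$ of the $B$-Voronoi skeleton, again a null set. Intersecting the two partitions gives the finite family $\{W(a,b)\}$, with $v\equiv b-a$ on each cell by the displayed identity in Section~\ref{sec:Wdomains}. For the uniqueness claim: if $x_0$ lies in the interior of some $W(a,b)$, then $v$ is locally constant, so $x(t)=x_0+t(b-a)$ is the unique Carath\'eodory solution; it remains valid until $t$ equals the first exit time $\tau=\inf\{t>0: x_0+t(b-a)\in\partial W(a,b)\}$, and uniqueness before $\tau$ is immediate since the ODE has a (locally) constant, hence Lipschitz, right-hand side there.

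For part~\ref{thm:discrete2}, this is essentially a restatement of the standard Filippov sliding construction already recalled in Section~\ref{sec:Wdomains}: under the convergent normal condition~\eqref{eq:conv-normal}, the numbers $n\cdot v_1$ and $n\cdot v_2$ have opposite signs, so the affine function $\alpha\mapsto n\cdot(\alpha v_1+(1-\alpha)v_2)$ changes sign on $[0,1]$ and has a unique root $\alpha=\dfrac{n\cdot v_2}{n\cdot(v_2-v_1)}\in(0,1)$; the corresponding convex combination is the unique element $v_{\mathrm{slide}}$ of $\co\{v_1,v_2\}$ with $n\cdot v_{\mathrm{slide}}=0$, and formula~\eqref{def:vslide} follows by substitution. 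Local attractivity of $\Sigma$ then follows from the cited Filippov theory~\cite{Brogliato2016,Filippov1988}: on the $W(a_1,b_1)$ side the field points toward $\Sigma$ and on the $W(a_2,b_2)$ side it also points toward $\Sigma$ (by the sign condition), so trajectories reach $\Sigma$ and are thereafter constrained to it with velocity $v_{\mathrm{slide}}$.

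For part~\ref{thm:discrete3}, the key idea is an arc-length estimate. On each interface $\Sigma_j$ the Filippov solution slides with velocity $v_{\mathrm{slide}}^{(j)}$ tangent to $\Sigma_j$; the uniform tangential speed bound~\eqref{eq:tang-speed} gives $\|\dot x(t)\|=\|\Pi_{\Sigma_j}(x(t))\,v_{\mathrm{slide}}^{(j)}\|\ge s_0$ (using that $v_{\mathrm{slide}}^{(j)}$ is already tangent, so $\Pi_{\Sigma_j}v_{\mathrm{slide}}^{(j)}=v_{\mathrm{slide}}^{(j)}$). Hence the time $\tau_j$ spent on $\Sigma_j$ satisfies $\ell_j \ge \int_0^{\tau_j}\|\dot x(t)\|\,dt \ge s_0\,\tau_j$, so $\tau_j\le \ell_j/s_0$. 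I would then argue that the trajectory passes through $\Sigma_1,\Sigma_2,\dots$ in order: reaching the endpoint of $\Sigma_j$ (the junction with $W(a_{j+1},b_{j+1})$) in finite time, it enters $\Sigma_{j+1}$ — this is where one uses the chain hypothesis $\Sigma_j\subset\partial W(a_j,b_j)\cap\partial W(a_{j+1},b_{j+1})$ together with~\eqref{eq:conv-normal} at the next interface to ensure the solution does not escape into a cell interior but is recaptured onto $\Sigma_{j+1}$. After $\Sigma_N$ the trajectory enters the solution cell $W(a^*,a^*)$, where $v=a^*-a^*=0$, so it is captured. Summing, $T_{\mathrm{cap}}\le\sum_{j=1}^N \tau_j\le\sum_{j=1}^N \ell_j/s_0$.

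The main obstacle is the hand-off argument in part~\ref{thm:discrete3}: making rigorous that a Filippov solution, upon reaching the end of one $C^1$ sliding segment $\Sigma_j$ at a junction of three or more W-domains, is actually routed onto the next segment $\Sigma_{j+1}$ rather than stalling at the junction, sliding backward, or leaking into a third cell. A clean treatment requires either a transversality-type assumption at the junctions (so that the Filippov cone at the junction point still selects a direction into $\Sigma_{j+1}$) or an explicit geometric hypothesis that the chain $\{\Sigma_j\}$ is arranged consistently with the velocities; I would state the needed junction condition explicitly and verify that~\eqref{eq:conv-normal} and~\eqref{eq:tang-speed} on each $\Sigma_j$, combined with the stated endpoint incidences, suffice to propagate the solution through all $N$ segments in finite total time. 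Everything else is routine: parts~\ref{thm:discrete1} and~\ref{thm:discrete2} are bookkeeping plus the cited Filippov theory, and the time bound is a one-line arc-length computation.
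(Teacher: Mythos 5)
Your proposal is correct and follows essentially the same route as the paper's proof: the finite-partition and constant-field bookkeeping for part (i), the standard Filippov sliding construction (unique root of the affine normal component) for part (ii), and the arc-length-over-speed bound $\tau_j\le\ell_j/s_0$ summed over $j$ for part (iii). The junction hand-off from $\Sigma_j$ to $\Sigma_{j+1}$ that you flag as the main obstacle is in fact also left implicit in the paper's own proof, which simply sums the per-segment bounds without addressing routing at multiway junctions, so your extra caution there identifies a genuine looseness in the original argument rather than a deviation from it.
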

\begin{proof}
\cref{thm:discrete1}
By Definition~\eqref{def:Wdomain} and \S\ref{sec:Wdomains}, the sets
$\{W(a,b)\}_{(a,b)\in A\times B}$ form a finite partition of $\R^m$ up to a null
set. On each $W(a,b)$ we have $v(x)\equiv b-a$. Since $v$ is constant on each
cell, Carath\'eodory solutions are unique until the first boundary hit.

\cref{thm:discrete2}
On a two-cell interface $\Sigma$, the Filippov set is
$\mathcal F(x)=\co\{v_1,v_2\}$. Under \eqref{eq:conv-normal}, Filippov theory
\cite[Ch.~2]{Filippov1988} yields a unique $v_{\mathrm{slide}}\in\mathcal F(x)$
with $n\cdot v_{\mathrm{slide}}=0$, and local attraction to $\Sigma$.

\cref{thm:discrete3}
Along each $\Sigma_j$, solutions slide with tangential speed bounded below by
$s_0$ by \eqref{eq:tang-speed}. Traversing a segment of length $\ell_j$ therefore
takes at most $\ell_j/s_0$ time. Summation over $j$ gives the bound and finite-time
capture into $W(a^*,a^*)$, where $v\equiv 0$.
\end{proof}

\subsection{Euler-flow approximation and hitting times}
\label{sec:euler-hitting}

The small-step interpretation of RRR as a time-rescaled continuous flow,
observed empirically in \cite[Section~5.5.8]{elser2025solving}, is justified here
by explicit error bounds for the Euler discretization and their consequences for
hitting times.

\noindent For $\eps>0$ and $x_0\in\R^m$, define the RRR iterates
$x_{k+1}=x_k+\eps v(x_k),~ x_0\in\R^m$, and let $x^\eps(\cdot)$ be the piecewise-linear interpolation with
$x^\eps(k\eps)=x_k$.

\begin{theorem}[Euler-flow approximation]\label{thm:euler}
Let $K\subset\R^m$ be compact and $T>0$. Assume either
\begin{enumerate}[(a)]
\item \textnormal{(S1)} holds on an open neighborhood of $K$, or
\item \textnormal{(S2)} holds and the Filippov solution $x(\cdot)$ intersects
only finitely many two-cell interfaces on $[0,T]$ and satisfies
$\dist(x(t),\mathcal J)\ge\delta_0>0$ for all $t\in[0,T]$, where $\mathcal J$
denotes the set of multiway junctions.
\end{enumerate}
Then there exist $\eps_0>0$ and $C=C(K,T)>0$ such that for all
$x_0\in K$ and $0<\eps\le\eps_0$,
\begin{equation}\label{eq:Euler-trajectory}
\sup_{t\in[0,T]}\|x^\eps(t)-x(t)\|\le C\,\eps,
\end{equation}
where $x(\cdot)$ is the Carath\'eodory (resp.\ Filippov) solution of
\eqref{eq:flow} with $x(0)=x_0$.
\end{theorem}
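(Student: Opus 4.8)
The plan is to treat the two cases separately, since case~(a) is a textbook situation and case~(b) requires handling the nonsmoothness at interfaces. For case~(a), the argument is the standard convergence theorem for the explicit Euler scheme applied to an autonomous ODE with a $C^1$ (hence locally Lipschitz) right-hand side. First I would fix a compact tube $K' \supset K$ contained in the open neighborhood where (S1) holds, let $L$ be a Lipschitz constant for $v$ on $K'$ and $B := \sup_{K'}\|v\|$. A priori the interpolated Euler curve $x^\eps$ stays within an $O(\eps)$-fattening of the exact trajectory as long as both remain in $K'$; choosing $\eps_0$ small enough and $T$ fixed, a continuation/bootstrap argument (the exact solution lies in the interior of $K'$ on $[0,T]$ by compactness, so a small perturbation stays in $K'$) keeps everything inside $K'$. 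On each step the local truncation error is $O(\eps^2)$ because $\tfrac{d}{dt}v(x(t)) = Dv(x(t))v(x(t))$ is bounded on $K'$; summing $N = \lceil T/\eps\rceil$ steps and applying the discrete Gr\"onwall lemma gives the global bound $\sup_{[0,T]}\|x^\eps(t)-x(t)\| \le C(K,T)\,\eps$ with $C$ depending only on $L$, $B$, $T$. The piecewise-linear-interpolation error between grid points contributes only an additional $O(\eps)$ since $\|\dot x^\eps\| = \|v(x_k)\| \le B$. I would also note continuous dependence on $x_0 \in K$ is uniform, so $\eps_0$ and $C$ can be chosen uniformly over $K$.

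For case~(b), the flow $v$ is piecewise constant on W-domains and the Filippov solution $x(\cdot)$ on $[0,T]$ consists of finitely many smooth arcs: transit arcs inside a single cell (where $v \equiv b-a$ is constant, so the exact solution is a straight line) interleaved with sliding segments along $C^1$ two-cell interfaces $\Sigma$ (where $x(\cdot)$ moves with velocity $v_{\mathrm{slide}}$ tangent to $\Sigma$), with all of this staying at distance $\ge \delta_0$ from the junction set $\mathcal J$. The plan is to induct on these arcs. On a transit arc the Euler iterates exactly follow the same line until they cross $\Sigma$, so the error there is zero up to the $O(\eps)$ overshoot incurred at the single step straddling the interface. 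The genuinely new ingredient is the behavior near an attracting interface: once the exact solution reaches $\Sigma$ and starts sliding, I must show the Euler iterates enter an $O(\eps)$-neighborhood of $\Sigma$ within $O(1)$ iterations and thereafter shadow the sliding motion. For this I would use the convergent normal condition \eqref{eq:conv-normal}: writing $\phi(x)$ for a smooth local defining function of $\Sigma$ (so $\nabla\phi \parallel n$), the two constant velocities satisfy $\nabla\phi\cdot v_1$ and $\nabla\phi\cdot v_2$ of opposite sign, hence the scalar $\phi(x_k)$ is driven toward $0$: each Euler step from the "wrong" side decreases $|\phi|$ by a definite amount $c\eps$ until $|\phi(x_k)| = O(\eps)$, after which the iterates chatter in an $O(\eps)$-band around $\Sigma$ while their tangential motion averages (over the two alternating velocities, with the fraction of $v_1$-steps vs $v_2$-steps self-adjusting to $\alpha$ as in \eqref{def:vslide}) to $v_{\mathrm{slide}} + O(\eps)$. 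A clean way to package the tangential estimate is to track the tangential component $\psi(x_k)$ of $x_k - x(t_k)$ and show it satisfies $\psi_{k+1} = \psi_k + O(\eps^2) + (\text{mean-zero chatter term})$, so that after $N \le T/\eps$ steps the accumulated tangential error is $O(\eps)$; the normal component is controlled directly by the contraction of $|\phi|$. Concatenating the finitely many arcs (the number of interface crossings on $[0,T]$ is finite by hypothesis, and $\delta_0 > 0$ bounds it) and adding the per-arc $O(\eps)$ errors yields \eqref{eq:Euler-trajectory}.

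The main obstacle is case~(b) near a sliding interface: the forward-Euler iterates do not lie on $\Sigma$ but oscillate across it, so one needs a quantitative "chattering lemma" showing that the time-averaged velocity of this oscillation agrees with $v_{\mathrm{slide}}$ up to $O(\eps)$ and that the oscillation amplitude is $O(\eps)$, uniformly as long as the iterates stay $\delta_0$-away from multiway junctions $\mathcal J$ (near $\mathcal J$ three or more velocities compete and the simple two-velocity averaging breaks down, which is exactly why that hypothesis is imposed). I expect this to require a short self-contained argument rather than a direct citation, since standard Filippov/Euler results are usually stated only for the existence of some convergent subsequence, not with the explicit $O(\eps)$ rate needed here; the $C^1$ regularity of $\Sigma$ and the uniform transversality from \eqref{eq:conv-normal} are what make the rate uniform. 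The transit arcs, the interpolation error, and case~(a) are all routine by comparison.
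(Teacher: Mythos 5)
Your plan follows the same overall architecture as the paper's proof: a Gr\"onwall/local-truncation argument on regions where $v$ is Lipschitz (all of case (a), and the interiors of W-domains in case (b)), a separate analysis of the chattering of Euler iterates across each convergent two-cell interface, and a final summation over the finitely many arcs, with the junction-avoidance hypothesis keeping the two-velocity analysis valid. Where you differ is in the interface step, and there your version is actually the more careful one. The paper disposes of sliding with a ``two-step Taylor expansion'' asserting $x_{k+2}-x_k = 2\eps\,v_{\mathrm{slide}}+O(\eps^2)$ plus a citation; taken literally, averaging one step in each cell gives $\tfrac12(v_1+v_2)$, which equals $v_{\mathrm{slide}}$ only when $\alpha=\tfrac12$ in \eqref{def:vslide}. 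Your picture --- the normal coordinate $\phi(x_k)$ is driven into and confined to an $O(\eps)$ band by \eqref{eq:conv-normal}, and the fraction of steps spent on each side self-adjusts to $\alpha$ so that the tangential drift averages to $v_{\mathrm{slide}}+O(\eps)$ --- is the correct mechanism, and you rightly identify that this ``chattering lemma'' needs a short self-contained quantitative argument rather than a citation. The one point you should still spell out is why the accumulated tangential chatter over $N\sim T/\eps$ steps is $O(\eps)$ rather than merely bounded: confinement of $\phi(x_k)$ to an $O(\eps)$ band forces the excess number of steps taken on either side (relative to the $\alpha:(1-\alpha)$ proportion) to stay $O(1)$ at all times, so the partial sums of the mean-zero chatter term are $O(\eps)$, not just its average. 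With that addendum your plan is complete and, on the interface step, strictly more rigorous than the paper's.
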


\begin{proof}
Let $e^\eps(t):=x^\eps(t)-x(t)$.  

\emph{Step 1: Lipschitz regions.}
On any open set $\Omega\subset\R^m$ where $v$ is $L$-Lipschitz, $e^\eps$ satisfies
\[
\dot e^\eps(t)=v(x^\eps(t))-v(x(t)),
\qquad
\|\dot e^\eps(t)\|\le L\|e^\eps(t)\|+O(\eps),
\]
for a.e.\ $t$ away from grid points. By Gr\"onwall,
\begin{equation}\label{eq:euler-lip}
\sup_{t\in[0,T]}\|e^\eps(t)\|\le C_1\eps .
\end{equation}
Under \textnormal{(S1)} this holds on a neighborhood of $K$; under
\textnormal{(S2)} it holds on each $W$-domain away from interfaces.

\emph{Step 2: two-cell interfaces.}
Let $\Sigma$ be a $C^1$ interface satisfying \eqref{eq:conv-normal}. By
Theorem~\ref{thm:discrete}\ref{thm:discrete2}, the Filippov solution satisfies
$\dot x(t)=v_{\mathrm{slide}}$ on $\Sigma$. Let $x_k$ be such that the Euler step
crosses $\Sigma$ at some $\tau\in(0,\eps)$. A two-step Taylor expansion of the
Euler scheme yields
\begin{equation}\label{eq:two-step}
x_{k+2}-x_k
=2\eps\,v_{\mathrm{slide}}+O(\eps^2),
\end{equation}
and the normal component of $x^\eps$ relative to $\Sigma$ is $O(\eps^2)$
(cf.\ \cite[Sec.~3]{Brogliato2016}). Hence
\begin{equation}\label{eq:euler-slide}
\sup_{t\in[k\eps,(k+2)\eps]}\|e^\eps(t)\|\le C_2\eps .
\end{equation}

\emph{Step 3: summation.}
By assumption, only finitely many interfaces are crossed on $[0,T]$ and junctions
are avoided. Combining \eqref{eq:euler-lip} and \eqref{eq:euler-slide} over the
resulting partition of $[0,T]$ yields \eqref{eq:Euler-trajectory}.
\end{proof}

\begin{theorem}[Hitting-time convergence]\label{thm:hitting}
Assume \textnormal{(S1)} and let $x_0\in U$, where $U$ is as in
Theorem~\ref{thm:smooth}. For $\delta>0$, define
\[
T^*_\delta(x_0):=\inf\{t\ge0:\|v(x(t))\|\le\delta\},
\qquad
t^*_\delta(\eps;x_0):=\inf\{k\eps:\|v(x_k)\|\le\delta\}.
\]
Then there exist $\eps_0>0$ and $C_\delta>0$ such that for all
$0<\eps\le\eps_0$,
\begin{equation}\label{eq:hitting-error}
\bigl|t^*_\delta(\eps;x_0)-T^*_\delta(x_0)\bigr|\le C_\delta\,\eps .
\end{equation}
\end{theorem}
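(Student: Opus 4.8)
The plan is to combine the exponential decay of the gap from Theorem~\ref{thm:smooth} with the trajectory error bound of Theorem~\ref{thm:euler}. Theorem~\ref{thm:smooth} gives us quantitative transversality of the gap function $g(x)=\|v(x)\|$ across the threshold level $\delta$: along the continuous flow, $g(x(t))$ is strictly decreasing at a controlled exponential rate while it exceeds $\delta$, so the continuous hitting time $T^*_\delta(x_0)$ is finite and, crucially, the flow crosses the level $\{g=\delta\}$ transversally rather than grazing it. The discrete iterates $x_k$ lie within $O(\eps)$ of the flow by Theorem~\ref{thm:euler}, and since $g$ is Lipschitz on the compact set $U$ (as $v$ is $C^1$ there), $|g(x_k)-g(x(k\eps))|\le L\cdot C\eps$. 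From these two facts one reads off that the discrete process crosses $\delta$ at a time within $O(\eps)$ of when the flow does.

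First I would make the transversality quantitative. Since $x_0\in U$ with $g(x_0)>\delta$ (otherwise both hitting times are $0$ and there is nothing to prove; the borderline case $g(x_0)=\delta$ can be handled by the same transversality argument or by a separate remark), Theorem~\ref{thm:smooth} and its Lyapunov estimate $\dot E \le -2\mu E$ with $E=\tfrac12\|v\|^2$ give $g(x(t))=e^{-\mu t}g(x_0)$ exactly (with $C=1$), hence $T^*:=T^*_\delta(x_0)=\mu^{-1}\log(g(x_0)/\delta)<\infty$, and there is a slope bound: for $t$ near $T^*$, $\frac{d}{dt}g(x(t)) = -\mu\, g(x(t)) \le -\mu\delta/2 < 0$ on a fixed neighborhood of $T^*$. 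So $g(x(\cdot))$ decreases at rate at least $\mu\delta/2$ as it crosses $\delta$; equivalently, for small $h>0$, $g(x(T^*-h))\ge \delta + (\mu\delta/2)h$ and $g(x(T^*+h))\le \delta - (\mu\delta/2)h$, at least up to higher-order terms controllable on a fixed window.

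Next I would transfer this to the discrete iterates. Fix $\eps_0$ and $C=C(U,T^*+1)$ from Theorem~\ref{thm:euler} applied on the compact invariant ball $\overline{B(x^*,r)}\subset U$ (which the flow never leaves, by the Lyapunov property) over the interval $[0,T^*+1]$; let $L$ be a Lipschitz constant for $g$ on that ball. For $0<\eps\le\eps_0$ and any grid point $k\eps\le T^*+1$ we have $|g(x_k)-g(x(k\eps))|\le LC\eps =: C'\eps$. Set $h(\eps):=(2C'/(\mu\delta))\eps$. For a grid point $k\eps \le T^*-h(\eps)$ we get $g(x_k)\ge g(x(k\eps))-C'\eps \ge \delta + (\mu\delta/2)h(\eps) - C'\eps = \delta + C'\eps > \delta$, so no grid point below $T^*-h(\eps)$ triggers the discrete stopping rule; thus $t^*_\delta(\eps;x_0)\ge T^*-h(\eps)-\eps$. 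Conversely, choosing the first grid point $k\eps$ with $k\eps\ge T^*+h(\eps)$ (which exists and satisfies $k\eps\le T^*+h(\eps)+\eps$), we get $g(x_k)\le g(x(k\eps))+C'\eps\le \delta-(\mu\delta/2)h(\eps)+C'\eps = \delta-C'\eps<\delta$, so the discrete rule has triggered by then: $t^*_\delta(\eps;x_0)\le T^*+h(\eps)+\eps$. Combining, $|t^*_\delta(\eps;x_0)-T^*_\delta(x_0)|\le h(\eps)+\eps = C_\delta\,\eps$ with $C_\delta := 2C'/(\mu\delta)+1$, provided $\eps_0$ is small enough that $h(\eps_0)+\eps_0\le 1$ (so all relevant grid points stay in $[0,T^*+1]$ and the local slope bound applies).

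The main obstacle is making the "local slope bound" on $g(x(\cdot))$ genuinely uniform and honest across the crossing: the clean identity $g(x(t))=e^{-\mu t}g(x_0)$ only holds once the $C=1$ Lyapunov estimate applies, i.e., once $x(0)$ lies in the shrunken ball $B(x^*,r)$ where Step~2--3 of Theorem~\ref{thm:smooth} are valid, and one must check the trajectory is already there initially (shrink $U$), so that the exponential law holds for all $t\ge0$ and there are no higher-order correction terms to worry about. Granting that, the crossing slope is exactly $-\mu g \in [-\mu g(x_0), -\mu\delta]$, which is bounded away from $0$, and the argument above is rigorous with no hidden $o(h)$ terms. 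The only remaining care is the borderline initial condition $g(x_0)=\delta$: then $T^*_\delta(x_0)=0$ and, since $g(x_k)=e^{-\mu k\eps}\delta + O(\eps)$ may sit just above $\delta$ for small $k$, one has $t^*_\delta(\eps;x_0)\le h(\eps)+\eps$, giving the bound with $T^*=0$; I would dispatch this case in one line.
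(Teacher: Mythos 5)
Your proposal is correct and follows essentially the same route as the paper: exponential decay from Theorem~\ref{thm:smooth} gives a transversal crossing of the level $\{\|v\|=\delta\}$ with slope bounded away from zero, Theorem~\ref{thm:euler} plus Lipschitz continuity of $\|v\|$ gives an $O(\eps)$ error in the gap along the discrete trajectory, and transversality converts this into an $O(\eps)$ error in the hitting time. The only difference is cosmetic: where the paper invokes the implicit function theorem for the final step, you carry out the two-sided crossing estimate explicitly (and your ``exact identity'' $g(x(t))=e^{-\mu t}g(x_0)$ should be the one-sided inequality $\dot g\le-\mu g$, which is all your argument actually uses).
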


\begin{proof}
By Theorem~\ref{thm:smooth},
\[
\|v(x(t))\|\le C_0 e^{-\mu t}\|v(x_0)\|,
\qquad
\frac{d}{dt}\|v(x(t))\|\le-\mu\|v(x(t))\|
\]
for a.e.\ $t$ with $v(x(t))\neq0$. Hence
\[
\dot{\|v(x(t))\|}\big|_{t=T^*_\delta(x_0)}\le-\mu\delta<0,
\]
so the crossing of $\{\|v\|=\delta\}$ is transversal. Fix $T>T^*_\delta(x_0)$. By Theorem~\ref{thm:euler},
\[
\sup_{t\in[0,T]}\|x^\eps(t)-x(t)\|\le C\eps .
\]
Continuity of $v$ on $U$ implies the existence of $L_\delta>0$ such that
\[
\sup_{t\in[0,T]}
\bigl|\|v(x^\eps(t))\|-\|v(x(t))\|\bigr|
\le L_\delta C\eps .
\]
Therefore, for $\eps$ sufficiently small, $x^\eps(\cdot)$ crosses the band
$\{\delta/2\le\|v\|\le2\delta\}$ exactly once, and the implicit function theorem
yields the estimate \eqref{eq:hitting-error}.
\end{proof}

\subsection{Quantitative flow limit}

We summarize the asymptotic behavior of the solution time in the flow and its
discrete approximation.

\begin{theorem}[Quantitative flow time and discrete correction]\label{thm:quant}
Assume the hypotheses of Theorem~\ref{thm:smooth}. Then there exist
$c_1,c_2>0$, $\delta_0>0$, and a neighborhood $U$ of $x^*$ such that for all
$x_0\in U$ and $\delta\in(0,\delta_0]$,
\begin{equation}\label{eq:Tdelta-bound}
T^*_\delta(x_0)\ \le\ c_1+c_2\log\!\frac{1}{\delta}.
\end{equation}
Moreover, for the discrete RRR iteration \eqref{eq:rrr},
\begin{equation}\label{eq:t-star-correction}
t^*_\delta(\eps;x_0)
= T^*_\delta(x_0)+O(\eps),
\qquad \eps\downarrow0,
\end{equation}
where the $O(\eps)$ term is uniform for $x_0$ in compact subsets of $U$.
\end{theorem}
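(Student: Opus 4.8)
The plan is to read off both displays from results already established, so the proof is essentially bookkeeping; the only point needing attention is the uniformity in $x_0$ in the second claim. For \eqref{eq:Tdelta-bound}, I would start from the logarithmic hitting-time bound \eqref{eq:Tdelta-log} of Theorem~\ref{thm:smooth}. Shrink $U$ if necessary so that it is forward-invariant under \eqref{eq:flow} with compact closure inside the neighbourhood on which $v$ is $C^1$ and \eqref{eq:smooth-exp} holds, and put $M:=\sup_{x\in\overline U}\|v(x)\|<\infty$. If $\|v(x_0)\|\le\delta$ then $T^*_\delta(x_0)=0$ and there is nothing to prove; otherwise \eqref{eq:Tdelta-log} gives $T^*_\delta(x_0)\le \mu^{-1}\log\bigl(C\|v(x_0)\|\bigr)+\mu^{-1}\log(1/\delta)\le \mu^{-1}\log(CM)+\mu^{-1}\log(1/\delta)$, so \eqref{eq:Tdelta-bound} holds with $c_2:=\mu^{-1}$, $c_1:=\mu^{-1}\max\{0,\log(CM)\}$, and any $\delta_0\in(0,M]$ (for $\delta>M$ the left-hand side is again $0$).

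For \eqref{eq:t-star-correction}, I would invoke Theorem~\ref{thm:hitting}: its conclusion \eqref{eq:hitting-error}, $|t^*_\delta(\eps;x_0)-T^*_\delta(x_0)|\le C_\delta\eps$, is exactly the asserted identity for each fixed $x_0\in U$ and $\delta>0$. It remains to check that $C_\delta$ and the threshold $\eps_0$ can be taken uniform over a fixed compact $K\subset U$. Three facts suffice. (i) By the bound just proved, $T_K:=\sup_{x_0\in K}T^*_\delta(x_0)<\infty$, so a single horizon $T>T_K$ works for all $x_0\in K$, and forward-invariance confines the relevant trajectories to a fixed compact $K'\subset U$. (ii) The differential inequality $\tfrac{d}{dt}\|v(x(t))\|\le-\mu\|v(x(t))\|$ from Theorem~\ref{thm:smooth} shows that at the hitting time the downward crossing speed is at least $\mu\delta$, a lower bound independent of $x_0$; hence $\{\,\|v\|=\delta\,\}$ is crossed transversally with uniformly controlled speed. (iii) Theorem~\ref{thm:euler}, applied with $K'$ and $T$, furnishes an Euler error constant $C(K',T)$ already uniform in $x_0\in K'$. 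Feeding (i)--(iii) into the implicit-function step of the proof of Theorem~\ref{thm:hitting} produces a single $C_\delta=C_\delta(K)$ and $\eps_0=\eps_0(K)$ with $|t^*_\delta(\eps;x_0)-T^*_\delta(x_0)|\le C_\delta\eps$ for all $x_0\in K$ and $0<\eps\le\eps_0$, which is \eqref{eq:t-star-correction} with the stated uniformity.

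Since everything reduces to Theorems~\ref{thm:smooth}, \ref{thm:euler} and \ref{thm:hitting}, there is no substantive obstacle; the only place where care is needed is that the constants in the hitting-time estimate not degenerate as $x_0$ ranges over $K$. This is guaranteed by the fact that the decay rate $\mu$ is independent of both $x_0$ and $\delta$ (so the transversal crossing speed stays bounded below by $\mu\delta$), by the compactness of $K$ together with forward-invariance (so the trajectories stay in a fixed compact subset of $U$ on $[0,T]$, where Theorem~\ref{thm:euler} applies with uniform constants), and by the uniform boundedness of $\|v\|$ on $\overline U$ used in the first part.
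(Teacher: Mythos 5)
Your proposal follows the paper's own argument: the logarithmic bound is obtained from the exponential decay of Theorem~\ref{thm:smooth} with $\|v(x_0)\|$ bounded by its supremum over a compact set (yielding $c_2=\mu^{-1}$, $c_1=\mu^{-1}\log(CM)$), and the discrete correction is read off from Theorem~\ref{thm:hitting}. Your additional bookkeeping on why $C_\delta$ and $\eps_0$ can be chosen uniformly over compact $K\subset U$ (uniform horizon, uniform transversal crossing speed $\mu\delta$, uniform Euler constant) is more explicit than the paper, which simply asserts the uniformity, but it is the same route.
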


\begin{proof}
By Theorem~\ref{thm:smooth}, there exist $C,\mu>0$ and a neighborhood $U$ of $x^*$
such that
\[
\|v(x(t))\|\le C e^{-\mu t}\|v(x_0)\|,
\qquad \forall\,x_0\in U,\ t\ge0.
\]
Fix a compact $K\subset U$ and set
\[
M:=\sup_{x_0\in K}\|v(x_0)\|<\infty.
\]
Evaluating the above bound at $t=T^*_\delta(x_0)$ gives
\[
\delta
\le \|v(x(T^*_\delta(x_0)))\|
\le C e^{-\mu T^*_\delta(x_0)} M,
\]
which implies \eqref{eq:Tdelta-bound} with
$c_2=\mu^{-1}$ and $c_1=\mu^{-1}\log(CM)$. The discrete correction \eqref{eq:t-star-correction} follows directly from the
hitting-time estimate of Theorem~\ref{thm:hitting}, which yields
\[
|t^*_\delta(\eps;x_0)-T^*_\delta(x_0)|\le C_\delta\,\eps
\]
uniformly for $x_0\in K$ and $\eps$ sufficiently small.
\end{proof}

\begin{remark}[Flow-limit behavior of the solution time]\label{rem:flow-limit}
For any fixed $x_0\in U$ and $\delta\in(0,\delta_0]$,
\[
\lim_{\eps\downarrow0} t^*_\delta(\eps;x_0)=T^*_\delta(x_0)<\infty.
\]
Consequently, the flow-time stabilizes as $\eps\downarrow0$, while the iteration
count
\[
k^*_\delta(\eps;x_0):=\frac{t^*_\delta(\eps;x_0)}{\eps}
\]
diverges at rate $O(\eps^{-1})$. This separation explains the empirical
observation that small step sizes regularize solution time while inducing a
problem-dependent iteration-optimal choice of $\eps$.
\end{remark}

\subsection{Discrete Lyapunov inequality for small-step RRR}
\label{sec:disc-lyap}

Let $x^*\in A\cap B$ be a transversal intersection in the smooth regime
\textnormal{(S1)}. Set
\[
\mathcal T_M := T_{x^*}A\cap T_{x^*}B,\qquad
\mathcal T^\perp := (\mathcal T_M)^\perp,
\]
and let $P_{\mathcal T}$ denote the orthogonal projector onto $\mathcal T^\perp$.
Let $J:=Dv(x^*)$ be the Jacobian from Lemma~\ref{lem:jacobian}.

\begin{theorem}[Local discrete Lyapunov inequality]\label{thm:disc-lyap}
Assume \textnormal{(S1)} and let $x^*\in A\cap B$ be transversal. Define
$J_{\mathcal T}:=P_{\mathcal T}JP_{\mathcal T}$. Then there exist
$H_{\mathcal T}\in\R^{m\times m}$, $c>0$, $\eps_0>0$, and $r>0$ such that:
\begin{enumerate}[(i)]
\item\label{disc:HT}
$H_{\mathcal T}\succeq0$, $\ker H_{\mathcal T}=\mathcal T_M$, and
$H_{\mathcal T}|_{\mathcal T^\perp}\succ0$;

\item\label{disc:inv}
for all $0<\eps\le\eps_0$ and $\|x_0-x^*\|\le r$, the RRR iterates
$x_{k+1}=x_k+\eps v(x_k)$ satisfy $\|x_k-x^*\|\le r$ for all $k\ge0$;

\item\label{disc:lyap}
the quadratic form
\begin{equation}\label{eq:V-def}
V(x):=(x-x^*)^\top H_{\mathcal T}(x-x^*)
\end{equation}
satisfies
\begin{equation}\label{eq:disc-lyap}
V(x_{k+1})\le(1-c\eps)\,V(x_k)\qquad\forall k\ge0;
\end{equation}

\item\label{disc:exp}
in particular,
\begin{equation}\label{eq:disc-exp}
V(x_k)\le e^{-c k\eps}\,V(x_0)\qquad\forall k\ge0,
\end{equation}
and $\|P_{\mathcal T}(x_k-x^*)\|\to0$ exponentially in discrete time
$t_k=k\eps$.
\end{enumerate}
\end{theorem}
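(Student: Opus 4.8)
The plan is a linearization-and-Lyapunov argument built on Lemma~\ref{lem:jacobian}. First I would record the algebraic structure of $J:=Dv(x^*)$: transversality forces $T_{x^*}A+T_{x^*}B=\R^m$, so by Lemma~\ref{lem:jacobian} the map $J=2p_Bp_A-p_B-p_A$ annihilates $\mathcal T_M$, has range inside $\mathcal T^\perp$ (hence $J=P_{\mathcal T}JP_{\mathcal T}=J_{\mathcal T}$ and $J$ leaves $\mathcal T^\perp$ invariant), and restricts on each principal-angle plane to $r(\theta_j)$ whose symmetric part is $-\sin^2\theta_j\,I$; thus $\tfrac12(J_{\mathcal T}+J_{\mathcal T}^\top)\preceq-\sigma P_{\mathcal T}$ with $\sigma:=\min_j\sin^2\theta_j>0$, and $J_{\mathcal T}|_{\mathcal T^\perp}$ has all eigenvalues with real part $\le-\sigma$. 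I would then take $H_{\mathcal T}:=P_{\mathcal T}$ (or, for a sharper rate, the unique solution on $\mathcal T^\perp$ of $J_{\mathcal T}^\top H_{\mathcal T}+H_{\mathcal T}J_{\mathcal T}=-P_{\mathcal T}$, extended by $0$ on $\mathcal T_M$), which makes \ref{disc:HT} immediate and gives $V(x)=\norm{P_{\mathcal T}(x-x^*)}^2$ in \eqref{eq:V-def}. The linear core is that $P_{\mathcal T}$ commutes with $I+\eps J$ and, for $u\in\mathcal T^\perp$, $\norm{(I+\eps J)u}^2=\norm u^2+2\eps\inner u{Ju}+\eps^2\norm{Ju}^2\le(1-2\eps\sigma+\eps^2\norm J^2)\norm u^2\le(1-2c\eps)\norm u^2$ with $2c:=\sigma$ as soon as $\eps\le\eps_0:=\sigma/\norm J^2$; equivalently $(I+\eps J)^\top H_{\mathcal T}(I+\eps J)\preceq(1-2c\eps)H_{\mathcal T}$ on $\mathcal T^\perp$, the discrete counterpart of the coercivity used in Theorem~\ref{thm:smooth}.

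Next I would insert the nonlinearity. Writing $\xi_k:=x_k-x^*$ and $v(x^*+\xi)=J\xi+\rho(\xi)$ with $\rho(\xi)=o(\norm\xi)$ by the $C^1$ regularity of $v$ under (S1), one has $P_{\mathcal T}\xi_{k+1}=(I+\eps J)P_{\mathcal T}\xi_k+\eps P_{\mathcal T}\rho(\xi_k)$, and expanding $V$ together with the linear estimate gives $V(x_{k+1})\le(1-2c\eps)V(x_k)+2\eps\,\norm{(I+\eps J)P_{\mathcal T}\xi_k}\,\norm{\rho(\xi_k)}+\eps^2\norm{\rho(\xi_k)}^2$. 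To convert this into \eqref{eq:disc-lyap} with some $c>0$ it suffices that $\norm{\rho(\xi_k)}\le\gamma\,\norm{P_{\mathcal T}\xi_k}$ for a small fixed $\gamma$ on the ball in play. This is exactly where the main obstacle sits: $M=A\cap B$ is only tangent to, not equal to, the affine space $x^*+\mathcal T_M$, so $\rho$ carries a genuinely quadratic contribution governed by the second fundamental form of $M$, and $\norm{\rho(\xi)}$ is in general only $O(\dist(x^*+\xi,M))$, which need not be $o(\norm{P_{\mathcal T}\xi})$. I would resolve this by passing to $C^2$ coordinates near $x^*$ that straighten $M$ onto $x^*+\mathcal T_M$ — equivalently, writing $x=x^*+w+\phi(w)+u$ with $w\in\mathcal T_M$, $u\in\mathcal T^\perp$ and $\phi$ the graph of $M$, so that $\dist(x,M)\asymp\norm u$ and $V\asymp\norm u^2$ — in which coordinates the RRR iteration is still an $O(\eps)$ forward-Euler scheme for a flow whose linearization is conjugate to $J$ and whose velocity vanishes on $\{u=0\}$, so the transverse variable obeys the clean bound above; shrinking $r$ then absorbs the residual $o(\cdot)$ terms and delivers \eqref{eq:disc-lyap} with a (possibly smaller) $c>0$.

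For the positive invariance \ref{disc:inv} I would run an induction on $k$: granting $x_0,\dots,x_k$ in the ball, the transverse estimate of the previous step gives $V(x_{j+1})\le(1-c\eps)V(x_j)$ for $j\le k$, whence $\norm{v(x_j)}\le(\norm J+\gamma)\sqrt{V(x_j)}$ decays geometrically and the tangential drift $\sum_{j\le k}\norm{w_{j+1}-w_j}=\sum_{j\le k}\eps\,\norm{(I-P_{\mathcal T})v(x_j)}\lesssim\norm J\sqrt{V(x_0)}/c$ is bounded by a geometric series independent of $k$ (the discrete analogue of the exponential decay \eqref{eq:smooth-exp}, which also follows from comparing Theorem~\ref{thm:euler} with Theorem~\ref{thm:smooth}); choosing $r$ small relative to $\sigma$, $\norm J$ and the curvature of $M$ then confines $x_{k+1}$ to the ball and closes the induction. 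Property \ref{disc:exp} is immediate from iterating \eqref{eq:disc-lyap}: $V(x_k)\le(1-c\eps)^kV(x_0)\le e^{-ck\eps}V(x_0)$, and comparability of $V$ with $\norm{P_{\mathcal T}(x_k-x^*)}^2$ (in the straightened coordinates, with $\dist(x_k,M)^2$) yields exponential decay in rescaled time $t_k=k\eps$. The genuinely delicate point, as flagged, is the uniform control of the nonlinear remainder over the whole ball in the presence of the neutrally stable $\mathcal T_M$ directions — forcing one to work modulo $M$ rather than modulo the flat $x^*+\mathcal T_M$ — while everything else is routine Hurwitz/Lyapunov bookkeeping together with the finite-horizon discrete–continuous comparison already furnished by Theorem~\ref{thm:euler}.
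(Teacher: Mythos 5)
Your proposal follows the same overall route as the paper's proof: linearize $v$ at $x^*$, establish the one-step contraction $(I+\eps J)^\top H_{\mathcal T}(I+\eps J)\preceq(1-2c\eps)H_{\mathcal T}$ on $\mathcal T^\perp$ for $\eps$ below a threshold set by $\|J\|$, feed in the nonlinear remainder, and close an induction for positive invariance. Two differences are worth recording. First, you take $H_{\mathcal T}=P_{\mathcal T}$ directly from the coercivity of the symmetric part of $J$ (Lemma~\ref{lem:jacobian}), whereas the paper invokes the continuous-time Lyapunov theorem to produce $H_{\mathcal T}$; these are interchangeable. Second, and more substantively, you isolate the genuine difficulty that the printed proof passes over: the remainder $\rho(\xi)=v(x^*+\xi)-J\xi$ is only $O(\|\xi\|^2)$, and $\|\xi\|$ is not controlled by $\|P_{\mathcal T}\xi\|\asymp\sqrt{V}$ in the neutral directions $\mathcal T_M$. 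The paper simply asserts that $C_3\eps\|x_k-x^*\|^3\le\tfrac{\gamma}{2}\eps V(x_k)$ for $r$ small, which fails when $x_k-x^*$ is nearly tangent to $M$ (for instance $x_k\in M\setminus\{x^*\}$ gives $v(x_k)=0$, hence $x_{k+1}=x_k$, while $V(x_k)>0$ whenever $M$ is curved, so \eqref{eq:disc-lyap} cannot hold for the fixed quadratic form). Your repair --- straightening $M$ in $C^2$ coordinates and measuring transversality by $\dist(\cdot,M)$ --- is the right idea, but be aware that it establishes the decay for a Lyapunov function comparable to $\dist(\cdot,M)^2$, not literally for the quadratic form \eqref{eq:V-def} of the statement; either the statement must be read with that modified $V$, or one must assume $M$ is affine (or a single point) near $x^*$. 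Your invariance argument in \ref{disc:inv}, via geometric summability of the tangential drift $\sum_j\eps\|(I-P_{\mathcal T})v(x_j)\|$, is likewise more complete than the paper's one-line ``follows by induction.'' In short, the proposal is sound in outline, matches the paper's strategy, and is more careful than the printed proof at its weakest step; the only caveat is the mismatch between your corrected Lyapunov function and the literal $V$ of the theorem, which is a defect of the statement rather than of your argument.
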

\begin{proof}
By Lemma~\ref{lem:jacobian} and Theorem~\ref{thm:smooth}, the spectrum of
$J_{\mathcal T}$ satisfies $\Re\lambda(J_{\mathcal T})<0$ on $\mathcal T^\perp$,
and $J_{\mathcal T}=0$ on $\mathcal T_M$. By the continuous-time Lyapunov theorem,
there exists $H_{\mathcal T}$ satisfying \eqref{disc:HT} and
\begin{equation}\label{eq:Lyap-cont}
H_{\mathcal T}J_{\mathcal T}+J_{\mathcal T}^\top H_{\mathcal T}
\;\preceq\;-2\gamma H_{\mathcal T}
\end{equation}
on $\mathcal T^\perp$ for some $\gamma>0$.

Consider the linearized map $y_{k+1}=(I+\eps J)y_k$. Then
\begin{align}
V(y_{k+1})-V(y_k)
&=y_k^\top\!\left[\eps(J^\top H_{\mathcal T}+H_{\mathcal T}J)
+\eps^2J^\top H_{\mathcal T}J\right]\!y_k. \label{eq:Vlin}
\end{align}
On $\mathcal T^\perp$, \eqref{eq:Lyap-cont} and finite dimensionality imply
$J^\top H_{\mathcal T}J\preceq C_1H_{\mathcal T}$ for some $C_1>0$. Choosing
$0<\eps_0\le\gamma/C_1$, we obtain
\[
(I+\eps J)^\top H_{\mathcal T}(I+\eps J)-H_{\mathcal T}
\;\preceq\;-\gamma\eps H_{\mathcal T},
\]
hence $V(y_{k+1})\le(1-\gamma\eps)V(y_k)$ for the linearized system. For the nonlinear RRR iteration, write
\[
x_{k+1}-x^*=(I+\eps J)(x_k-x^*)+\eps r(x_k),
\qquad r(x):=v(x)-J(x-x^*).
\]
Since $v\in C^1$ near $x^*$, there exists $C_2>0$ such that
$\|r(x)\|\le C_2\|x-x^*\|^2$ locally. Expanding $V(x_{k+1})$ and using
\eqref{eq:Vlin} yields
\[
V(x_{k+1})
\le(1-\gamma\eps)V(x_k)+C_3\eps\|x_k-x^*\|^3+C_4\eps^2\|x_k-x^*\|^4 .
\]
Since $H_{\mathcal T}|_{\mathcal T^\perp}\succ0$, there exist $m,M>0$ such that
\[
m\|P_{\mathcal T}(x_k-x^*)\|^2\le V(x_k)\le
M\|P_{\mathcal T}(x_k-x^*)\|^2 .
\]
Choosing $r>0$ sufficiently small ensures
\[
C_3\eps\|x_k-x^*\|^3+C_4\eps^2\|x_k-x^*\|^4
\le\tfrac{\gamma}{2}\eps V(x_k)
\]
whenever $\|x_k-x^*\|\le r$. Thus
\[
V(x_{k+1})\le(1-\tfrac{\gamma}{2}\eps)V(x_k),
\]
which gives \eqref{eq:disc-lyap} with $c=\gamma/2$. Invariance of $B(x^*,r)$
follows by induction, and \eqref{eq:disc-exp} is obtained by iteration.
\end{proof}

\section{Global basins, generic chains, and junctions}\label{sec:beyond-local}

\subsection{Smooth tubular basin}

We strengthen Theorem~\ref{thm:smooth} under uniform geometric control of the
feasible manifold $M:=A\cap B$.

\begin{assumption}[Smooth tube]\label{ass:ST}
Assume \textnormal{(S1)} and that $M:=A\cap B$ is compact. Suppose:
\begin{enumerate}[(i)]
\item $\angle(\mathcal T_xA,\mathcal T_xB)\in[\theta_0,\pi/2]$ for all $x\in M$
and some $\theta_0>0$;
\item $A$ and $B$ have reach $\ge\rho>0$ and second fundamental forms bounded by
$\kappa<\infty$.
\end{enumerate}
\end{assumption}

\noindent Define, for $0<r\le\rho$,
\begin{equation}\label{eq:tube}
\mathcal S_r
:=\{x\in\R^m:\ \dist(x,A)\le r,\ \dist(R_Ax,B)\le r\}.
\end{equation}

\begin{theorem}[Strict Lyapunov function on a tube]\label{thm:tube-Lyap}
Under Assumption~\ref{ass:ST} there exist $r\in(0,\rho]$ and $\sigma>0$ such that
for every solution $x(\cdot)$ of \eqref{eq:flow} with $x(t)\in\mathcal S_r$,
\begin{equation}\label{eq:tube-decay}
\frac{d}{dt}\Bigl(\tfrac12\|v(x(t))\|^2\Bigr)
\le -\sigma\,\|v(x(t))\|^2 .
\end{equation}
Consequently, $\|v(x(t))\|\to0$ exponentially and $\dist(x(t),M)\to0$ as
$t\to\infty$.
\end{theorem}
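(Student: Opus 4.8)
The plan is to upgrade the local computation of Theorem~\ref{thm:smooth} to a uniform statement over the compact manifold $M$, using Assumption~\ref{ass:ST} to make all constants uniform in the base point. First I would record the structure of $v$ on $\mathcal S_r$: for $r\le\rho$, the metric projection $P_A$ is single-valued and $C^1$ on the $\rho$-tube around $A$ (since $A$ has reach $\ge\rho$), and likewise $P_B$ near $B$; hence on $\mathcal S_r$ the field $v=P_B\circ R_A-P_A$ is well-defined and $C^1$, with derivatives controlled by the reach $\rho$ and the curvature bound $\kappa$. Moreover $v$ vanishes exactly on $M$ (on the tube), and for $x\in M$ the Jacobian $Dv(x)$ is the projector combination $2p_B(x)p_A(x)-p_B(x)-p_A(x)$ of Lemma~\ref{lem:jacobian}, whose symmetric part is $\preceq-\sigma_0 P_{\mathcal T_x}$ with $\sigma_0:=\sin^2\theta_0>0$ uniformly in $x\in M$ by hypothesis (i).

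Next I would run the three-step argument of Theorem~\ref{thm:smooth} with base point $x$ replaced by the nearest point $\pi(x):=P_M(x)\in M$ (well-defined on a tube around the compact manifold $M$, after shrinking $r$). The key uniform ingredients are: (a) $\operatorname{ran}Dv(\pi(x))\subseteq\mathcal T_{\pi(x)}:=T_{\pi(x)}A+T_{\pi(x)}B$ and the symmetric part of $Dv(\pi(x))$ restricted to $\mathcal T_{\pi(x)}$ is $\preceq-\sigma_0 P_{\mathcal T_{\pi(x)}}$, uniformly; (b) a uniform $C^1$-modulus for $v$ on $\mathcal S_r$, so that $\|Dv(x)-Dv(\pi(x))\|\le L\,\dist(x,M)\le Lr$ for a constant $L=L(\rho,\kappa,\theta_0)$; and (c) a uniform transverse-dominance estimate $\|P_{\mathcal T_{\pi(x)}}v(x)\|\ge\kappa_0\|v(x)\|$ on $\mathcal S_r$ for $r$ small, obtained exactly as in Step~2 of Theorem~\ref{thm:smooth} but with the Taylor remainder bound made uniform via compactness of $M$. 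Combining these, for $x\in\mathcal S_r$ with $r$ chosen small enough that $Lr\le\sigma_0\kappa_0^2/4$, the same chain of inequalities gives $\langle v(x),S(x)v(x)\rangle\le-\tfrac12\sigma_0\kappa_0^2\|v(x)\|^2$, which is \eqref{eq:tube-decay} with $\sigma:=\sigma_0\kappa_0^2/2$. Gr\"onwall then yields $\|v(x(t))\|\le e^{-\sigma t}\|v(x(0))\|\to0$; and since $\gap(x)=\|v(x)\|$ is, on the tube, comparable to $\dist(x,M)$ (transversality plus the uniform lower angle bound give a two-sided comparison $c_1\dist(x,M)\le\|v(x)\|\le c_2\dist(x,M)$), we conclude $\dist(x(t),M)\to0$ exponentially. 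I should also note that $\mathcal S_r$ is positively invariant: $\tfrac12\|v\|^2$ is nonincreasing along the flow by \eqref{eq:tube-decay}, and a level set of $\|v\|$ sits inside $\mathcal S_r$ once $r$ is small, so trajectories starting in an inner sublevel set cannot escape --- this justifies applying the differential inequality for all $t\ge0$.

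The main obstacle is establishing the \emph{uniform} transverse-dominance bound (c) together with the \emph{uniform} $C^1$ control (b), since both require turning the pointwise Taylor expansion of Theorem~\ref{thm:smooth}, whose radius $r$ and remainder constant a priori depend on the base point, into a statement with constants depending only on $\rho$, $\kappa$, and $\theta_0$. This is where Assumption~\ref{ass:ST} does the real work: the reach bound $\rho$ gives a uniform tube on which $P_A,P_B$ are $C^1$ with derivative bounds depending only on $\rho$, the second-fundamental-form bound $\kappa$ controls the Hessians of the projections (hence the Lipschitz modulus of $Dv$), and compactness of $M$ upgrades "for each $x$ there is $r_x>0$" to "there is $r>0$ working for all $x$" via a covering argument. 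Once these uniform constants are in hand, the differential inequality and its consequences are immediate from the computation already carried out in Theorem~\ref{thm:smooth}, so I would keep the write-up of Steps~1--3 terse and concentrate the exposition on the uniformization.
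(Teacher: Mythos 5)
Your proposal follows essentially the same route as the paper: uniformize the pointwise Jacobian estimate of Lemma~\ref{lem:jacobian} over the compact manifold $M$ via the reach and curvature bounds, perturb the symmetric part $S(x)$ from its value at the base point $P_M(x)$, and apply the differential inequality to $E=\tfrac12\|v\|^2$. Your write-up is in fact slightly more careful than the paper's, since you make the uniform transverse-dominance bound $\|P_{\mathcal T_{\pi(x)}}v(x)\|\ge\kappa_0\|v(x)\|$ and the positive-invariance issue explicit, whereas the paper leaves both implicit.
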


\begin{proof}
For $\bar x\in M$, Lemma~\ref{lem:jacobian} and Assumption~\ref{ass:ST}(i) yield
\[
\tfrac12\bigl(Dv(\bar x)+Dv(\bar x)^\top\bigr)
\preceq -\sin^2\theta_0\,P_{\mathcal T_{\bar x}A+\mathcal T_{\bar x}B}.
\]
Assumption~\ref{ass:ST}(ii) implies $P_A,P_B\in C^1(\mathcal S_r)$ for $r$ small,
and there exists $C<\infty$ such that
\[
\|Dv(x)-Dv(\bar x)\|\le Cr,
\qquad x\in\mathcal S_r,\ \bar x:=P_M(x).
\]
Choosing $r>0$ so that $Cr\le\tfrac12\sin^2\theta_0$, the symmetric part
$S(x):=\tfrac12(Dv(x)+Dv(x)^\top)$ satisfies
\[
S(x)\preceq -\tfrac12\sin^2\theta_0\,P_{\mathcal T_{\bar x}A+\mathcal T_{\bar x}B},
\qquad \forall x\in\mathcal S_r .
\]
Let $E(x):=\tfrac12\|v(x)\|^2$. Along solutions in $\mathcal S_r$,
\[
\dot E(x(t))
=\langle v(x(t)),Dv(x(t))v(x(t))\rangle
=\langle v(x(t)),S(x(t))v(x(t))\rangle
\le -\tfrac12\sin^2\theta_0\,\|v(x(t))\|^2 ,
\]
which is \eqref{eq:tube-decay} with $\sigma:=\tfrac12\sin^2\theta_0$.
\end{proof}

\begin{corollary}[No recurrence in the tube]\label{cor:no-recurrence}
Consider Assumption~\ref{ass:ST}. Let $x(\cdot)$ be a solution of $\dot x=v(x)$ with
$x(0)\in\mathcal S_r$. Then
\[
\omega(x)\subseteq M:=A\cap B.
\]
In particular, $\mathcal S_r$ contains no nontrivial periodic orbits and no
chaotic invariant sets of the flow.
\end{corollary}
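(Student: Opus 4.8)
The plan is to combine the strict Lyapunov inequality of Theorem~\ref{thm:tube-Lyap} with a standard LaSalle-type argument, using compactness to pin down the $\omega$-limit set. First I would observe that the tube $\mathcal S_r$ is positively invariant: along any solution $x(\cdot)$ with $x(0)\in\mathcal S_r$, the energy $E(x(t))=\tfrac12\|v(x(t))\|^2$ is nonincreasing by \eqref{eq:tube-decay}, so in particular $x(t)$ cannot escape through the part of $\partial\mathcal S_r$ where $\|v\|$ is bounded away from zero; one should remark that near $M$ the gap $\|v\|$ controls $\dist(x,M)$ (as already noted after \eqref{eq:gap} and reflected in the definition \eqref{eq:tube}), so sublevel sets of $E$ inside $\mathcal S_r$ are compact and stay in $\mathcal S_r$. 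Hence the forward orbit is precompact and $\omega(x)$ is nonempty, compact, connected, and invariant.

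Next I would run the LaSalle step. Since $E$ is bounded below by $0$ and nonincreasing along the solution, $E(x(t))\downarrow E_\infty\ge0$ as $t\to\infty$. On the $\omega$-limit set $E$ is constant equal to $E_\infty$, and because $\omega(x)$ is invariant, for any $y\in\omega(x)$ the solution through $y$ stays in $\omega(x)$, so $\tfrac{d}{dt}E=0$ along it. But \eqref{eq:tube-decay} forces $\tfrac{d}{dt}E\le-\sigma\|v\|^2$, so $\|v(y)\|=0$ for every $y\in\omega(x)$; thus $E_\infty=0$ and $\omega(x)$ consists entirely of equilibria of the flow. By Lemma~\ref{lem:jacobian} and the discussion following it, the equilibria of \eqref{eq:flow} inside the tube are exactly the points of $M=A\cap B$ (the projections restrict to the identity on $M$ near each transversal intersection, and conversely $v(x)=0$ with $x\in\mathcal S_r$ forces $P_B(R_Ax)=P_Ax$, which together with the reach/angle bounds of Assumption~\ref{ass:ST} identifies $x$ with a point of $M$). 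This gives $\omega(x)\subseteq M$.

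For the final sentence, a nontrivial periodic orbit $\gamma$ contained in $\mathcal S_r$ would be its own $\omega$-limit set, hence $\gamma\subseteq M$; but $M$ consists of equilibria, so $\gamma$ is a single point, a contradiction. Likewise a chaotic invariant set $\Lambda\subseteq\mathcal S_r$ would, for each of its points, have $\omega$-limit set inside both $\Lambda$ and $M$, so $\Lambda$ would contain equilibria and — by invariance and the strict decrease of $E$ off $M$ — in fact $\Lambda\subseteq M$; since $M$ is a smooth manifold of equilibria on which the flow is trivial, it supports no sensitive dependence or topological transitivity, ruling out chaos. I would state this as: any compact invariant set in $\mathcal S_r$ on which $\|v\|$ does not vanish identically is impossible, because $E$ would be a strictly decreasing continuous function on an invariant set, contradicting Poincar\'e recurrence / the existence of a minimal invariant subset.

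The main obstacle I expect is the clean identification ``equilibria in $\mathcal S_r$ $=M$'': Theorem~\ref{thm:tube-Lyap} only asserts the differential inequality for solutions that \emph{remain} in $\mathcal S_r$, so I must first secure positive invariance of an appropriate sublevel set of $E$ within $\mathcal S_r$ (this is where Assumption~\ref{ass:ST} and the equivalence between $\|v(x)\|$ and $\dist(x,M)$ near $M$ are used), and I must rule out spurious zeros of $v$ in $\mathcal S_r$ that are not in $M$ — these are excluded precisely because the reach bound $\rho$ and the uniform transversality $\theta_0$ make $P_A$, $R_A$, $P_B$ behave like their linearizations, so $v(x)=0$ on the tube pins $x$ to the diagonal cell, i.e.\ to $M$. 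Once invariance and this identification are in hand, the LaSalle argument and the exclusion of recurrence and chaos are routine.
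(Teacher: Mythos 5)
Your proof is correct and follows essentially the same route as the paper's: the strict decrease of $E(x)=\tfrac12\|v(x)\|^2$ from Theorem~\ref{thm:tube-Lyap} forces $\omega(x)\subseteq\{x:\,v(x)=0\}=M$ via the standard LaSalle/monotonicity argument, and nontrivial periodic orbits and chaotic invariant sets are then excluded because $M$ consists of equilibria. You are in fact more careful than the paper's two-line proof on precisely the two points you flag --- positive invariance of (a sublevel set of $E$ inside) $\mathcal S_r$, and the identification $\{v=0\}\cap\mathcal S_r=M$, which under transversality follows from $x-P_Ax\in N_{P_Ax}A\cap N_{P_Ax}B=(T_{P_Ax}A+T_{P_Ax}B)^\perp=\{0\}$ --- both of which the paper's argument takes for granted.
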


\begin{proof}
By Theorem~\ref{thm:tube-Lyap},
\[
\dot E(x(t))\le -\sigma\|v(x(t))\|^2<0
\quad\text{whenever }v(x(t))\neq0.
\]
Hence $E(x(t))$ is strictly decreasing along any nonconstant trajectory in
$\mathcal S_r$. Therefore no point $x\in\mathcal S_r\setminus M$ can belong to
$\omega(x)$, and $\omega(x)\subseteq\{x:\ v(x)=0\}=M$.
\end{proof}

\begin{remark}[Regularity in the feasible case]\label{rem:chaos}
In the feasible case $A\cap B\neq\varnothing$, Assumption~\ref{ass:ST} and
Theorem~\ref{thm:tube-Lyap} yield a strict Lyapunov function
$E(x)=\tfrac12\|v(x)\|^2$ on $\mathcal S_r$. Consequently, the flow restricted to
$\mathcal S_r$ is gradient-like: all trajectories converge to $M$, and no
recurrent dynamics occur in $\mathcal S_r$.

This is consistent with the observations in \cite[Chapter~5]{elser2025solving},
where chaotic behavior for RRR is reported only for infeasible instances
($A\cap B=\varnothing$) or outside basins of attraction. The present results are
local: they exclude recurrence near $M$ but do not address global dynamics.
\end{remark}

\subsection{Generic descent chains in the discrete case}

\begin{assumption}[Discrete genericity]\label{ass:DG}
Assume \textnormal{(S2)} and:
\begin{enumerate}[(i)]
\item $\{\|b-a\|^2:\ (a,b)\in A\times B\}$ are pairwise distinct;
\item for each $b\in B$ (resp.\ $a\in A$), the Voronoi adjacency graph on $A$
(resp.\ $B$) is connected;
\item W-domain boundaries are in general position; in particular, all multiway
junctions have codimension $\ge2$.
\end{enumerate}
\end{assumption}

For $(a,b)\in A\times B$, define the \emph{pair distance}
\[
d(a,b):=\|b-a\|^2 .
\]

\begin{lemma}[Descent across an A-switch]\label{lem:A-descent}
Let $b\in B$ and let $\Sigma\subset\partial W(a_1,b)\cap\partial W(a_2,b)$ be an
A-switch boundary. Let $n$ be the unit normal to $\Sigma$ oriented from
$W(a_1,b)$ to $W(a_2,b)$, and set $v_i:=b-a_i$. Define
\[
\alpha:=\langle a_2-a_1,\ b-a_1\rangle .
\]
Then:
\begin{enumerate}[(i)]
\item $(n\!\cdot\! v_1)(n\!\cdot\! v_2)<0
\iff
\alpha\in(0,\|a_2-a_1\|^2)$;
\item along the sliding velocity $v_{\mathrm{slide}}$ on $\Sigma$,
\[
\frac{d}{dt}d(a(t),b)<0
\quad\text{whenever }\alpha>\tfrac12\|a_2-a_1\|^2,
\]
where $a(t)\in\{a_1,a_2\}$ denotes the active $A$-index along the sliding path.
\end{enumerate}
\end{lemma}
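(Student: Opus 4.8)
The key observation is that on an A-switch boundary the dual index $b$ is fixed, so only the $A$-projection changes across $\Sigma$. Let me set up coordinates. The boundary $\Sigma = \partial W(a_1,b) \cap \partial W(a_2,b)$ lies where $\|R_A x - a_1\| = \|R_A x - a_2\|$... wait, actually the $W$-domain condition involves $P_A(x) = a_i$, which is a Voronoi condition on $x$ directly: $\|x - a_1\| = \|x - a_2\|$. So $\Sigma$ is (a piece of) the perpendicular bisector hyperplane of the segment $[a_1, a_2]$, and the unit normal is $n = (a_2 - a_1)/\|a_2 - a_1\|$ (oriented toward $a_2$, i.e. from $W(a_1,b)$ where $x$ is closer to $a_1$, toward $W(a_2,b)$).

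Let me check the orientation and compute $n \cdot v_i$. Since $v_i = b - a_i$, we have
\[
n \cdot v_1 = \frac{\langle a_2 - a_1, b - a_1\rangle}{\|a_2-a_1\|} = \frac{\alpha}{\|a_2-a_1\|}, \qquad n \cdot v_2 = \frac{\langle a_2 - a_1, b - a_2\rangle}{\|a_2-a_1\|} = \frac{\alpha - \|a_2-a_1\|^2}{\|a_2-a_1\|}.
\]
Hmm wait, I should double-check the orientation of $n$: in $W(a_1, b)$ we have $P_A(x) = a_1$, so $x$ is closer to $a_1$; as $x$ moves toward $a_2$'s Voronoi cell it crosses into $W(a_2,b)$. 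So $n$ points in the direction of increasing $\langle x, a_2 - a_1\rangle$, confirming $n = (a_2-a_1)/\|a_2-a_1\|$. But then actually I realize the sign convention in the paper might have $n$ oriented the other way (from $W(a_1,b)$ *into* $W(a_2,b)$ means pointing toward the $a_2$-cell, which is toward $a_2$) — yes, consistent.

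So part (i): $(n\cdot v_1)(n\cdot v_2) < 0 \iff \alpha(\alpha - \|a_2-a_1\|^2) < 0 \iff 0 < \alpha < \|a_2-a_1\|^2$. That's immediate. For part (ii), the sliding velocity is $v_{\mathrm{slide}} = \alpha_* v_1 + (1-\alpha_*) v_2$ with $\alpha_* = \frac{n\cdot v_2}{n\cdot(v_2 - v_1)}$. Note $v_2 - v_1 = a_1 - a_2 = -\|a_2-a_1\| n$, so $n\cdot(v_2-v_1) = -\|a_2-a_1\|$ and $\alpha_* = \frac{(\alpha - \|a_2-a_1\|^2)/\|a_2-a_1\|}{-\|a_2-a_1\|} = \frac{\|a_2-a_1\|^2 - \alpha}{\|a_2-a_1\|^2}$. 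So $\alpha_* \in (0,1)$ exactly in the regime of (i), and $\alpha_* > 1/2 \iff \alpha < \frac12\|a_2-a_1\|^2$, while $\alpha_* < 1/2 \iff \alpha > \frac12\|a_2-a_1\|^2$. Now, $d(a(t), b) = \|b - a(t)\|^2$, and the "active index" along the sliding path is the one with larger convex weight (that's the intended reading: $a(t) = a_1$ if $\alpha_* > 1/2$, $a(t) = a_2$ if $\alpha_* < 1/2$). We want $\frac{d}{dt} d(a(t),b) < 0$; but $d(a(t),b)$ is piecewise constant in $t$ along the sliding motion since $a(t)$ takes discrete values — so I think the intended statement is really a *comparison*: the active index (larger weight) has strictly smaller pair distance, i.e. $d(a_1, b) < d(a_2, b)$ when $\alpha > \frac12\|a_2-a_1\|^2$. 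Compute $d(a_2,b) - d(a_1,b) = \|b-a_2\|^2 - \|b-a_1\|^2 = \|a_2\|^2 - \|a_1\|^2 - 2\langle b, a_2 - a_1\rangle = -2\langle b - \frac{a_1+a_2}{2}, a_2 - a_1\rangle = -2(\alpha - \frac12\|a_2-a_1\|^2)$ after expanding $\langle b - \frac{a_1+a_2}{2}, a_2-a_1\rangle = \langle b - a_1, a_2 - a_1\rangle - \frac12\|a_2-a_1\|^2 = \alpha - \frac12\|a_2-a_1\|^2$. So $d(a_2,b) - d(a_1,b) = -2\alpha + \|a_2-a_1\|^2$, which is $>0 \iff \alpha > \frac12\|a_2-a_1\|^2$, confirming $d(a_1,b) < d(a_2,b)$ precisely then, and $a_1$ is the active index in that regime. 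This is the descent statement: the sliding dynamics selects the $A$-index achieving the smaller pair distance.

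**Write-up plan.** I would first record that $\Sigma$ lies in the bisector hyperplane of $[a_1,a_2]$ with normal $n = (a_2-a_1)/\|a_2-a_1\|$, using the definition of $W(a,b)$. Then I'd compute $n\cdot v_1 = \alpha/\|a_2-a_1\|$ and $n\cdot v_2 = (\alpha - \|a_2-a_1\|^2)/\|a_2-a_1\|$ by direct expansion, from which (i) is an immediate sign computation. For (ii) I'd express the convex weight $\alpha_*$ from \eqref{def:vslide}, use $v_2 - v_1 = -\|a_2-a_1\|\,n$ to simplify, observe $\alpha_* \gtrless \tfrac12 \iff \alpha \lessgtr \tfrac12\|a_2-a_1\|^2$, and then compute $d(a_2,b) - d(a_1,b) = \|a_2-a_1\|^2 - 2\alpha$ via the polarization identity; combining gives that the dominant (active) index along the slide is the one with strictly smaller pair distance whenever $\alpha > \tfrac12\|a_2-a_1\|^2$, which is the claimed descent. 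The only mild subtlety to address in prose is the interpretation of "$\frac{d}{dt}d(a(t),b) < 0$": since $a(t)$ is locally constant, this should be read as the statement that crossing into the sliding regime strictly decreases the pair distance of the active index, i.e. the jump $d(a_2,b) \to d(a_1,b)$ at cell entry is a strict decrease; I would state this explicitly to avoid ambiguity. There is no real obstacle here — the whole lemma is linear algebra on a bisector hyperplane — the main care is getting the orientation of $n$ and the identification of the active index consistent with the paper's conventions.
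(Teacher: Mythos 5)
Your approach is the same as the paper's (which is extremely terse): identify $\Sigma$ with the perpendicular bisector of $[a_1,a_2]$, take $n=(a_2-a_1)/\|a_2-a_1\|$, compute $n\cdot v_1=\alpha/\|a_2-a_1\|$ and $n\cdot v_2=(\alpha-\|a_2-a_1\|^2)/\|a_2-a_1\|$ to get (i), and for (ii) compare the convex weight in $v_{\mathrm{slide}}$ with the sign of $d(a_2,b)-d(a_1,b)$. Parts of this are carried out correctly and in more detail than the paper, and your remark that $\frac{d}{dt}d(a(t),b)<0$ must be read as a strict decrease of the pair distance at the switch (since $a(t)$ is piecewise constant) is a genuine clarification the paper omits.

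However, the last step of your part (ii) contains a sign error that flips the conclusion. You correctly derive $d(a_2,b)-d(a_1,b)=\|a_2-a_1\|^2-2\alpha$, but then assert this is positive iff $\alpha>\tfrac12\|a_2-a_1\|^2$; the inequality goes the other way. The correct reading is: $\alpha>\tfrac12\|a_2-a_1\|^2$ forces $d(a_2,b)<d(a_1,b)$, and by your own (correct) computation $\alpha_*=(\|a_2-a_1\|^2-\alpha)/\|a_2-a_1\|^2<\tfrac12$ in that regime, so the dominant weight in $v_{\mathrm{slide}}$ sits on $v_2$ and the active index is $a_2$, not $a_1$. Your final sentence ("confirming $d(a_1,b)<d(a_2,b)$ \dots and $a_1$ is the active index") contradicts both of these and, as written, would prove ascent rather than descent. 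The fix is purely local — swap the roles of $a_1$ and $a_2$ in the concluding sentence — after which the argument matches the paper's intended proof that the sliding dynamics selects the $A$-index with the strictly smaller pair distance.
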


\begin{proof}
Since $\Sigma$ is a Voronoi bisector between $a_1$ and $a_2$ with $b$ fixed, its
normal satisfies $n=\pm(a_2-a_1)/\|a_2-a_1\|$. Direct evaluation of
$n\cdot v_i$ yields (i). Statement (ii) follows since $d(a,b)$ depends only on the
active index $a$, and the inequality $\alpha>\tfrac12\|a_2-a_1\|^2$ selects the
direction corresponding to the strictly smaller distance.
\end{proof}

\begin{theorem}[Finite descent chain to a solution cell]\label{thm:chain}
Assume Assumption~\ref{ass:DG} and $A\cap B\neq\varnothing$. Then for any
nonsolution cell $W(a,b)$ there exists a finite sequence
\[
W(a,b)=W(a_0,b_0)\to W(a_1,b_1)\to\cdots\to W(a_N,b_N)=W(a^*,a^*)
\]
such that:
\begin{enumerate}[(i)]
\item $W(a_j,b_j)$ and $W(a_{j+1},b_{j+1})$ share a boundary satisfying
\eqref{eq:conv-normal};
\item $d(a_{j+1},b_{j+1})<d(a_j,b_j)$ for all $j$, and
\[
N\le\frac{d(a,b)-d(a^*,a^*)}{\Delta},
\qquad
\Delta:=\min_{\substack{(a',b')\neq(a'',b'')\\(a',b'),(a'',b'')\in A\times B}}
|d(a',b')-d(a'',b'')|>0 .
\]
\end{enumerate}
Consequently, any Filippov solution following these convergent boundaries reaches
$W(a^*,a^*)$ after finitely many boundary transitions.
\end{theorem}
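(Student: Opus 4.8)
The strategy is to build the descent chain greedily, one boundary transition at a time, using the pair distance $d(a,b)=\|b-a\|^2$ as a monotone potential and the connectivity from Assumption~\ref{ass:DG}(ii) to guarantee that a descending move always exists until the solution cell $W(a^*,a^*)$ is reached. First I would observe that in any nonsolution cell $W(a,b)$ we have $a\neq b$, so $d(a,b)>0=d(a^*,a^*)$, where $a^*\in A\cap B$ is any feasible point (which exists since $A\cap B\neq\varnothing$). The cell $W(a^*,a^*)$ is exactly the set of $x$ with $P_A(x)=a^*$ and $P_B(R_Ax)=a^*$, and on it $v\equiv 0$, so it is a solution cell in the sense used in Theorems~\ref{thm:discrete} and~\ref{thm:chain}.

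The inductive step is the heart of the argument. Given a nonsolution cell $W(a_j,b_j)$, I want to produce an adjacent cell $W(a_{j+1},b_{j+1})$ sharing a boundary that satisfies the convergent normal condition \eqref{eq:conv-normal} and with strictly smaller pair distance. I would split into two moves. \emph{A-switch:} with $b_j$ held fixed, Assumption~\ref{ass:DG}(ii) says the Voronoi adjacency graph on $A$ (for the site configuration relevant to $b_j$) is connected, so unless $a_j$ is already the $A$-vertex minimizing $\|b_j-a\|^2$, there is a Voronoi-adjacent $a_{j+1}$ with $\|b_j-a_{j+1}\|^2<\|b_j-a_j\|^2$; Lemma~\ref{lem:A-descent} then shows the shared bisector boundary satisfies \eqref{eq:conv-normal} (equivalently $\alpha\in(0,\|a_{j+1}-a_j\|^2)$) and that the move strictly decreases $d$ precisely because $\|b_j-a_{j+1}\|^2<\|b_j-a_j\|^2$ forces $\alpha>\tfrac12\|a_{j+1}-a_j\|^2$. \emph{B-switch:} symmetrically, with the $A$-index held fixed, use Assumption~\ref{ass:DG}(ii) for the Voronoi graph on $B$ and the mirror of Lemma~\ref{lem:A-descent} to decrease $\|b-a\|^2$ in the $b$-coordinate. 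Iterating, $d$ is driven down until neither an $A$-switch nor a $B$-switch decreases it, i.e. until $a_j$ minimizes $\|b_j-a\|^2$ over $A$ and $b_j$ minimizes $\|b-a_j\|^2$ over $B$; since $A\cap B\neq\varnothing$ and the pair distances are pairwise distinct by Assumption~\ref{ass:DG}(i), the unique joint minimizer of $d$ over $A\times B$ is $(a^*,a^*)$ with $a^*\in A\cap B$, so the chain terminates at $W(a^*,a^*)$.

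For the length bound (ii), each transition strictly decreases $d$, and because the values $\{d(a,b):(a,b)\in A\times B\}$ are pairwise distinct (Assumption~\ref{ass:DG}(i)), each decrease is by at least $\Delta:=\min|d(a',b')-d(a'',b'')|>0$ over distinct pairs; hence the number of transitions satisfies $N\le (d(a,b)-d(a^*,a^*))/\Delta$, giving a finite chain. General position of the boundaries (Assumption~\ref{ass:DG}(iii)) ensures each shared boundary is a genuine $C^1$ two-cell interface rather than a higher-codimension junction, so Theorem~\ref{thm:discrete}\ref{thm:discrete2} applies at each step and a Filippov solution following these convergent boundaries slides from one to the next, reaching $W(a^*,a^*)$ after finitely many transitions; combined with Theorem~\ref{thm:discrete}\ref{thm:discrete3} this also bounds the capture time.

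The step I expect to be the main obstacle is making the greedy descent rigorous at the combinatorial level: one must verify that a $d$-decreasing Voronoi-adjacent move is always \emph{realized by an actual shared W-domain boundary} (not merely by adjacency in an abstract Voronoi graph), that the convergent normal condition genuinely holds there via Lemma~\ref{lem:A-descent}(i), and that alternating $A$- and $B$-switches cannot cycle — which follows from strict monotonicity of $d$ but requires care that a "switch" always changes exactly one of the two indices so that Lemma~\ref{lem:A-descent} is literally applicable. Handling the interplay of the two Voronoi structures (the $A$-graph depends on the fixed $b$, and vice versa) and confirming that the terminal configuration is forced to be $(a^*,a^*)$ rather than some other local minimum is where genericity assumptions (i) and (ii) are essential and must be invoked carefully.
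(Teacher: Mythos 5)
Your proof follows essentially the same route as the paper's: a greedy alternating descent in the pair distance $d(a,b)=\|b-a\|^2$, using the Voronoi connectivity of Assumption~\ref{ass:DG}(ii) together with Lemma~\ref{lem:A-descent} to realize each strict decrease as a shared two-cell boundary, and the pairwise distinctness of the values of $d$ from Assumption~\ref{ass:DG}(i) to bound the chain length by $(d(a,b)-d(a^*,a^*))/\Delta$. The one place where your write-up asserts more than it establishes --- and which you yourself flag as the delicate point --- is the termination claim: uniqueness of the \emph{joint} minimizer $(a^*,a^*)$ does not by itself rule out the alternating descent stalling at a coordinatewise-optimal pair $(a,b)$ with $a\neq b$ (a local best-approximation pair), and moreover the descent condition $\alpha>\tfrac12\|a_2-a_1\|^2$ of Lemma~\ref{lem:A-descent}(ii) does not automatically imply the convergent-normal condition $\alpha<\|a_2-a_1\|^2$ of Lemma~\ref{lem:A-descent}(i), so both must be verified at each switch; the paper's own proof is equally terse on these two points, so your proposal is faithful to it rather than deficient relative to it.
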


\begin{proof}
If $a\notin\arg\min_{a'\in A}d(a',b)$, Assumption~\ref{ass:DG}(ii) provides a
Voronoi path in $A$ to some $a'$ with $d(a',b)<d(a,b)$; by
Lemma~\ref{lem:A-descent}, one can choose A-switches along which $d$ strictly
decreases. If $b\notin\arg\min_{b'\in B}d(a,b')$, the symmetric argument applies.
Since all values of $d$ are distinct and bounded below by $d(a^*,a^*)$, the
number of strict decreases is finite and bounded by $(d(a,b)-d(a^*,a^*))/\Delta$.
\end{proof}

\subsection{Nonunique projections and junctions}

Let $\mathcal M_A,\mathcal M_B\subset\R^m$ denote the medial axes where $P_A,P_B$
are multivalued, and let $\mathcal J$ denote the set of points where three or more
$W$-domains intersect.

\begin{theorem}[Negligibility of medial and junction sets]\label{thm:medial}
Assume either \textnormal{(S1)} with compact $M=A\cap B$ or \textnormal{(S2)} with
Assumption~\ref{ass:DG}. Then:
\begin{enumerate}[(i)]
\item
\[
\mathcal L^m(\mathcal M_A)=\mathcal L^m(\RA^{-1}(\mathcal M_B))=0 ;
\]

\item
in case \textnormal{(S2)},
\[
\mathcal L^m(\mathcal J)=0,
\qquad
\mathcal L^1\!\left(\{t\ge0:\ x(t)\in\mathcal J\}\right)=0
\]
for every Filippov solution $x(\cdot)$.
\end{enumerate}
\end{theorem}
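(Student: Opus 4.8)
\textbf{Proof plan for Theorem~\ref{thm:medial}.}
The plan is to treat the two assertions separately, each by a dimension-counting argument showing that the offending set is contained in a countable union of lower-dimensional (rectifiable) pieces, hence Lebesgue-null, and then to upgrade (ii) from a spatial null statement to a temporal one along individual Filippov trajectories.

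For part (i): In case \textnormal{(S2)} the medial axis $\mathcal M_A$ is the set of $x$ equidistant from two or more points of the finite set $A$, so $\mathcal M_A\subseteq\bigcup_{a\neq a'\in A}\{x:\|x-a\|=\|x-a'\|\}$, a finite union of affine hyperplanes; each has $\mathcal L^m$-measure zero, hence $\mathcal L^m(\mathcal M_A)=0$. Similarly $\RA^{-1}(\mathcal M_B)=\{x:\,2P_Ax-x\in\mathcal M_B\}$ is contained in $\bigcup_a(\RA|_{P_A^{-1}(a)})^{-1}(\mathcal M_B)$; on each piece $P_A^{-1}(a)$ the reflector is the affine isometry $x\mapsto 2a-x$, which maps null sets to null sets, so the preimage of the hyperplane union $\mathcal M_B$ is again a finite union of affine pieces of measure zero. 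In case \textnormal{(S1)} with $M$ compact one invokes the reach assumption: for a $C^2$ submanifold $A$ of positive reach $\rho$, $P_A$ is single-valued (indeed $C^1$) on the open $\rho$-tube around $A$, and the medial axis $\mathcal M_A$ lies in the complement of this tube; by a standard result (e.g.\ Federer's theory of sets of positive reach, or the coarea/Eilenberg inequality applied to the nearest-point map) $\mathcal M_A$ is $\mathcal L^m$-negligible --- in fact it is $(\mathcal H^{m-1}$-)rectifiable away from $A$. The same argument applied to $B$ together with the fact that $\RA$ is a bi-Lipschitz homeomorphism of $\R^m$ onto itself with Lipschitz inverse on each projection cell (or globally, since $\|R_Ax-R_Ay\|$ is controlled by the $C^1$ bound on $P_A$ in the tube) gives $\mathcal L^m(\RA^{-1}(\mathcal M_B))=0$.

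For part (ii): Under Assumption~\ref{ass:DG}(iii) the $W$-domain boundaries are in general position, so the junction set $\mathcal J$, where three or more cells meet, has codimension $\ge 2$; concretely $\mathcal J$ is contained in the finite union of pairwise intersections of the codimension-one interface manifolds, each of which is a $C^1$ submanifold of dimension $\le m-2$, hence $\mathcal H^{m-1}(\mathcal J)=0$ and a fortiori $\mathcal L^m(\mathcal J)=0$. For the temporal statement, let $x(\cdot)$ be a Filippov solution on $[0,\infty)$; it is absolutely continuous, hence Lipschitz on compact time intervals, so for each $T$ the image $x([0,T])$ has finite $\mathcal H^1$-measure and $t\mapsto x(t)$ maps $\mathcal L^1$-null sets to $\mathcal H^1$-null sets. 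The set $\{t\in[0,T]:x(t)\in\mathcal J\}$ equals $x^{-1}(\mathcal J)\cap[0,T]$; since $\dim\mathcal J\le m-2$ and a generic $C^1$ (indeed merely Lipschitz-with-a.e.-derivative) curve crosses a codimension-$\ge 2$ set only on a time-set of measure zero --- one argues that on any time interval where $x(t)\in\mathcal J$ the velocity $\dot x(t)$, which a.e.\ lies in the Filippov set $\mathcal F(x(t))=\co\{v(a,b)-a:\dots\}$, cannot remain tangent to the $(\le m-2)$-dimensional stratum for a positive-measure set of times under Assumption~\ref{ass:DG} (the constant cell-velocities $b-a$ are in general position relative to the junction strata, so no nontrivial sliding mode is supported on $\mathcal J$). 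Concluding, $\mathcal L^1(\{t\ge 0:x(t)\in\mathcal J\})=0$ by exhausting $[0,\infty)$ with the intervals $[0,T]$.

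The main obstacle is the temporal statement in (ii): unlike the purely measure-theoretic null-set claims, one must rule out a Filippov solution that \emph{slides along} a junction stratum for positive time. The cleanest route is to show that a codimension-$\ge 2$ junction cannot carry a sliding mode: a Filippov sliding solution on a $C^1$ manifold $\Sigma$ of codimension $p$ requires the convex hull of the adjacent constant velocities to meet $T_x\Sigma$, and under the general-position Assumption~\ref{ass:DG}(iii) --- which makes the velocity vectors $\{b_j-a_j\}$ of the finitely many cells abutting $\mathcal J$ generic relative to the normal spaces of the strata --- this intersection is empty for $p\ge 2$ except on a further lower-dimensional set; hence any Filippov solution can touch $\mathcal J$ only instantaneously, and the set of such times is null. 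I would present this as a short lemma (no sliding on codimension-$\ge2$ strata under genericity) and then the temporal conclusion is immediate from absolute continuity of $x(\cdot)$.
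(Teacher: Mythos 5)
Your proposal is correct and, for the purely spatial claims, follows the same dimension-counting route as the paper: in (S2) the medial axes are unions of bisector hyperplanes and $\RA$ is piecewise affine, in (S1) one invokes positive reach, and the junction set $\mathcal J$ has codimension $\ge 2$ by general position; the paper states exactly these facts, only more tersely. Where you genuinely diverge is the temporal statement in (ii), and there your version is the stronger one. The paper disposes of it in one sentence --- ``since Filippov solutions are absolutely continuous, their preimages of $\mathcal J$ have one-dimensional Lebesgue measure zero'' --- which is not a valid inference on its own: absolute continuity of $x(\cdot)$ does not prevent the trajectory from dwelling in a Lebesgue-null set for a set of times of positive measure (a constant solution sitting at a single point is the extreme case). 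You correctly identify this as the real obstacle and propose the missing ingredient: a lemma showing that under the general-position hypothesis no Filippov sliding mode is supported on a codimension-$\ge 2$ stratum, because $\co\{v_1,\dots,v_k\}$ generically misses the tangent space of such a stratum. That is the right way to close the gap, and I would encourage you to state and prove that lemma explicitly, since Assumption~\ref{ass:DG}(iii) as written only asserts general position of the \emph{boundaries}, not of the cell velocities relative to the junction strata --- the genericity of the velocities has to be either added to the assumption or derived. One further caveat applies to both your argument and the paper's: at a junction point adjacent to a solution cell $W(a^*,a^*)$ one has $0\in\co\{v_i\}$, so a Filippov solution may legitimately remain there for all time; the temporal claim should therefore be read (or restated) for trajectories prior to capture, or with such junctions excluded.
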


\begin{proof}
(i)
For closed $C^2$ submanifolds (and for finite sets), medial axes have Hausdorff
codimension at least $1$, hence zero Lebesgue measure. Since $\RA$ is a $C^1$
diffeomorphism on $\mathcal S_r$ in case \textnormal{(S1)} and an affine map in
case \textnormal{(S2)} (Definition~\eqref{def:Wdomain}), the preimage
$\RA^{-1}(\mathcal M_B)$ also has measure zero.

(ii)
Under Assumption~\ref{ass:DG}(iii), $\mathcal J$ is a finite union of intersections
of at least two smooth codimension-$1$ hypersurfaces and therefore has
codimension at least $2$, implying $\mathcal L^m(\mathcal J)=0$. Since Filippov
solutions are absolutely continuous, their preimages of $\mathcal J$ have
one-dimensional Lebesgue measure zero.
\end{proof}

\subsection{A percolation and renormalization group model for the $\beta\uparrow1$ catastrophe}
\label{sec:beta_catastrophe_model}

This subsection is heuristic. Its role is to formalize a mesoscopic object
that can carry the $\beta\uparrow1$ divergence picture discussed in
\cite[Ch.~5]{elser2025solving} and measured in \S\ref{sec:num_ledm}.

\paragraph{Cell dynamics induced by $T_\beta$.}
Assume \textnormal{(S2)} and use the $W$-domain partition from
\eqref{def:Wdomain}. Index the (a.e.) cells by a finite set $V$,
\[
\R^m=\bigsqcup_{i\in V} W_i \quad (\text{up to a null set}),\qquad
v(x)=v_i \ \ \text{for a.e.\ }x\in W_i,
\]
so that the RRR map is
\[
T_\beta(x)=x+\beta v_i,\qquad x\in W_i.
\]
Fix a probability measure $\mu$ on $\R^m$ with continuous strictly positive
density. The quantity below measures how much of cell $i$ is transported into
cell $j$ by one step of $T_\beta$.

\begin{definition}[Cell transition kernel, support digraph, and absorbing set]
\label{def:cell-kernel}
For $\beta>0$, define
\[
P_\beta(i,j)
:=\mu\!\left(W_i\cap T_\beta^{-1}(W_j)\,\middle|\,W_i\right)
=\frac{\mu\!\left(W_i\cap (W_j-\beta v_i)\right)}{\mu(W_i)}.
\]
Define the support digraph $G_\beta=(V,E_\beta)$ by
\[
(i\to j)\in E_\beta \iff P_\beta(i,j)>0 .
\]
Let
\[
S:=\{\,i\in V:\ v_i=0\,\}
\]
denote the set of solution cells (absorbing fixed points of $T_\beta$).
\end{definition}

The deterministic orbit $x_{k+1}=T_\beta(x_k)$ induces a cell itinerary
$i_k\in V$ via $x_k\in W_{i_k}$.
The kernel $P_\beta$ is a \emph{coarse} model of cell-to-cell transport under a
reference distribution; $G_\beta$ records which transitions are possible at all.
Large strongly connected regions of $G_\beta$ represent a mechanism for long
wandering before hitting $S$, consistent with the “search phase” dominating
in \S\ref{sec:num_ledm}.

\paragraph{Order parameter.}
Consider a problem family indexed by $n$ with cell sets $V_n$ and solution sets
$S_n\subseteq V_n$. Write $G_{\beta,n}$ for the corresponding support digraph.

\begin{definition}[Wandering order parameter and graph phase transition]
\label{def:order-parameter}
Let $\mathrm{SCC}_{\max}(G)$ be a largest strongly connected component of $G$.
Define
\[
\Phi_n(\beta)
:=\frac{1}{|V_n|}
\Bigl|\mathrm{SCC}_{\max}\bigl(G_{\beta,n}\restriction(V_n\setminus S_n)\bigr)\Bigr|.
\]
We say the family exhibits a (directed) phase transition at $\beta_c\in(0,1)$ if
\[
\lim_{n\to\infty}\Phi_n(\beta)=0 \ \ (\beta<\beta_c),
\qquad
\liminf_{n\to\infty}\Phi_n(\beta)>0 \ \ (\beta>\beta_c).
\]
\end{definition}

\noindent To connect $\beta\mapsto G_{\beta,n}$ with standard threshold heuristics, we
randomize edges according to $P_\beta$ as an inhomogeneous directed percolation
model \cite{DurrettRandomGraphDynamics,GrimmettPercolation}.

\begin{definition}[Edge percolation on the support digraph]
\label{def:edge-perc}
Let $\{U_{ij}\}_{(i,j)\in V\times V}$ be i.i.d.\ $\mathrm{Unif}[0,1]$.
Define $G_\beta^\omega$ by declaring $i\to j$ open iff $U_{ij}\le P_\beta(i,j)$.
Equivalently, conditional on $P_\beta$, edges are independent with
$\mathbb{P}(i\to j\text{ open})=P_\beta(i,j)$.
\end{definition}

\begin{lemma}[Monotone coupling in $\beta$]
\label{lem:monotone-coupling}
If $P_\beta(i,j)$ is nondecreasing in $\beta$ for all $(i,j)$, then there exists
a coupling such that
\[
\beta_1\le\beta_2 \implies E(G_{\beta_1}^\omega)\subseteq E(G_{\beta_2}^\omega)
\quad\text{a.s.}
\]
\end{lemma}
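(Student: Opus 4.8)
The plan is to realize all the $G_\beta^\omega$ on a single probability space via the standard Harris-type coupling: carry exactly \emph{one} family $\{U_{ij}\}_{(i,j)\in V\times V}$ of i.i.d.\ $\mathrm{Unif}[0,1]$ variables and, for \emph{every} $\beta$ at once, declare $i\to j$ open in $G_\beta^\omega$ iff $U_{ij}\le P_\beta(i,j)$, exactly as in Definition~\ref{def:edge-perc}. First I would check that this construction has the correct marginals: for a fixed $\beta$, since the $U_{ij}$ are i.i.d.\ uniform and $0\le P_\beta(i,j)\le 1$ (the set $W_i\cap(W_j-\beta v_i)$ is contained in $W_i$, so the defining ratio lies in $[0,1]$), the edges $\{i\to j\}$ are mutually independent with $\mathbb P(i\to j\text{ open})=\mathbb P(U_{ij}\le P_\beta(i,j))=P_\beta(i,j)$; hence the law of $G_\beta^\omega$ coincides with the one in Definition~\ref{def:edge-perc}, and finiteness of $V$ makes all measurability points trivial.

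The key step is the pointwise monotonicity of the threshold events. Fix an ordered pair $(i,j)$ and $\beta_1\le\beta_2$. By hypothesis $P_{\beta_1}(i,j)\le P_{\beta_2}(i,j)$, so $\{U_{ij}\le P_{\beta_1}(i,j)\}\subseteq\{U_{ij}\le P_{\beta_2}(i,j)\}$ as subsets of the sample space; equivalently, if $i\to j$ is open in $G_{\beta_1}^\omega$ then it is open in $G_{\beta_2}^\omega$. Taking the union over all ordered pairs gives $E(G_{\beta_1}^\omega)\subseteq E(G_{\beta_2}^\omega)$ for \emph{every} sample point $\omega$, which in particular holds almost surely. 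That is the asserted coupling.

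There is essentially no analytic obstacle; the one place that genuinely needs (one-line) verification is that the coupled marginals match Definition~\ref{def:edge-perc}, which reduces to $P_\beta\in[0,1]$ and independence of the $U_{ij}$. As a bonus worth recording, the same single family $\{U_{ij}\}$ couples the whole one-parameter family $\{G_\beta^\omega\}_{\beta\in(0,1)}$ simultaneously, so $\beta\mapsto E(G_\beta^\omega)$ is a nondecreasing (for set inclusion) family of edge sets on a common space; this monotone edge process is precisely the object one wants when passing to the order parameter $\Phi_n(\beta)$ of Definition~\ref{def:order-parameter} and to the percolation and renormalization heuristics that follow.
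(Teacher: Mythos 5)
Your proposal is correct and uses exactly the paper's argument: a single family of common uniforms $\{U_{ij}\}$ and the pointwise inclusion $\{U_{ij}\le P_{\beta_1}(i,j)\}\subseteq\{U_{ij}\le P_{\beta_2}(i,j)\}$. The additional checks on marginals and the remark about coupling the whole one-parameter family simultaneously are fine but not different in substance.
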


\begin{proof}
Use common uniforms $U_{ij}$ and the inclusion
$\{U_{ij}\le P_{\beta_1}(i,j)\}\subseteq \{U_{ij}\le P_{\beta_2}(i,j)\}$.
\end{proof}

\paragraph{Renormalization group coarse-graining.}
We coarse-grain $G$ by collapsing its recurrent blocks (SCCs), in the spirit of
RG block transformations (cf.\ \cite{goldenfeld1992lectures}).

\begin{definition}[SCC condensation RG map]
\label{def:rg}
Let $\mathcal{C}(G)$ be the condensation digraph of $G$ (vertices are SCCs; edges
record inter-SCC reachability). Define
\[
\mathcal{R}(G):=\mathcal{C}(G),\qquad G^{(0)}=G,\ \ G^{(\ell+1)}=\mathcal{R}(G^{(\ell)}).
\]
\end{definition}

\begin{lemma}[Condensation preserves reachability]
\label{lem:condensation-reach}
For any digraph $G$ and vertices $u,v$,
\[
u\leadsto v \text{ in }G \iff \mathrm{SCC}(u)\leadsto \mathrm{SCC}(v)\text{ in }\mathcal{C}(G).
\]
\end{lemma}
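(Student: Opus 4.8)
The claim to prove is Lemma~\ref{lem:condensation-reach}: for any digraph $G$ and vertices $u,v$, one has $u\leadsto v$ in $G$ if and only if $\mathrm{SCC}(u)\leadsto \mathrm{SCC}(v)$ in the condensation $\mathcal{C}(G)$. This is a standard structural fact about strongly connected components, so my plan is to give a clean two-direction argument based on the defining property of SCCs (mutual reachability) and on lifting and projecting walks between $G$ and $\mathcal{C}(G)$.

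For the forward direction, suppose $u\leadsto v$ in $G$, i.e.\ there is a directed walk $u = w_0 \to w_1 \to \cdots \to w_k = v$. Pass to the sequence of SCCs $\mathrm{SCC}(w_0), \mathrm{SCC}(w_1), \ldots, \mathrm{SCC}(w_k)$. For each consecutive pair, either $\mathrm{SCC}(w_i) = \mathrm{SCC}(w_{i+1})$, or the edge $w_i \to w_{i+1}$ descends to an edge $\mathrm{SCC}(w_i) \to \mathrm{SCC}(w_{i+1})$ in $\mathcal{C}(G)$ by the definition of the condensation (inter-SCC reachability edge, witnessed by this very edge). Deleting the repeated vertices from the SCC sequence yields a walk from $\mathrm{SCC}(u)$ to $\mathrm{SCC}(v)$ in $\mathcal{C}(G)$, so $\mathrm{SCC}(u)\leadsto\mathrm{SCC}(v)$. (The trivial case $u=v$, or more generally $\mathrm{SCC}(u)=\mathrm{SCC}(v)$, is handled by the length-zero walk.)

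For the reverse direction, suppose $\mathrm{SCC}(u)\leadsto\mathrm{SCC}(v)$ in $\mathcal{C}(G)$, via a walk $C_0 \to C_1 \to \cdots \to C_k$ with $C_0 = \mathrm{SCC}(u)$, $C_k = \mathrm{SCC}(v)$. I lift this to a walk in $G$ by induction on $k$: an edge $C_i \to C_{i+1}$ in $\mathcal{C}(G)$ means, by definition, there exist $x \in C_i$ and $y \in C_{i+1}$ with $x \to y$ in $G$ (or, in the variant where condensation edges record reachability rather than a single edge, a directed path $x\leadsto y$ in $G$ with $x\in C_i$, $y\in C_{i+1}$). Since each $C_i$ is strongly connected, any two of its vertices are mutually reachable in $G$; in particular I can route from the exit vertex chosen for the incoming edge of $C_i$ to the entry vertex chosen for the outgoing edge of $C_i$ entirely within $C_i$. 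Concatenating these within-SCC paths with the inter-SCC edges (or paths), and prepending a path from $u$ to the first exit vertex inside $C_0$ and appending a path from the last entry vertex to $v$ inside $C_k$, produces a directed walk from $u$ to $v$ in $G$. Hence $u\leadsto v$.

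There is essentially no obstacle here; the only point requiring a little care is bookkeeping of "entry" and "exit" vertices when stitching the within-SCC detours, and making sure the argument is phrased so that it works both for the "single-edge" and the "reachability" convention for edges of $\mathcal{C}(G)$ (Definition~\ref{def:rg} says edges record inter-SCC reachability, so the reverse direction should invoke a path $x\leadsto y$ rather than an edge $x\to y$, which only makes the lift easier). I would state the proof in a few lines, noting that both directions reduce to the observation that passing to SCCs is a homomorphism of reachability structure, with the within-SCC mutual reachability supplying the lift. I expect this to occupy well under half a page.

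\begin{proof}
If $u\leadsto v$ in $G$, fix a directed walk $u=w_0\to w_1\to\cdots\to w_k=v$.
For each $i$, either $\mathrm{SCC}(w_i)=\mathrm{SCC}(w_{i+1})$, or the edge
$w_i\to w_{i+1}$ witnesses an edge $\mathrm{SCC}(w_i)\to\mathrm{SCC}(w_{i+1})$ of
$\mathcal{C}(G)$. Collapsing consecutive repetitions in the sequence
$\mathrm{SCC}(w_0),\ldots,\mathrm{SCC}(w_k)$ yields a directed walk from
$\mathrm{SCC}(u)$ to $\mathrm{SCC}(v)$ in $\mathcal{C}(G)$, so
$\mathrm{SCC}(u)\leadsto\mathrm{SCC}(v)$.

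Conversely, suppose $\mathrm{SCC}(u)\leadsto\mathrm{SCC}(v)$ in $\mathcal{C}(G)$
via a walk $C_0\to C_1\to\cdots\to C_k$ with $C_0=\mathrm{SCC}(u)$ and
$C_k=\mathrm{SCC}(v)$; we argue by induction on $k$. For $k=0$ we have
$\mathrm{SCC}(u)=\mathrm{SCC}(v)$, so $u$ and $v$ are mutually reachable in $G$
and in particular $u\leadsto v$. For $k\ge1$, the edge $C_{k-1}\to C_k$ of
$\mathcal{C}(G)$ is witnessed by vertices $x\in C_{k-1}$ and $y\in C_k$ with
$x\leadsto y$ in $G$. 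By the inductive hypothesis applied to the walk
$C_0\to\cdots\to C_{k-1}$ (taking $x$ as the target in $C_{k-1}$) there is a
directed path in $G$ from $u$ to some vertex of $C_{k-1}$; since $C_{k-1}$ is
strongly connected, we may extend it inside $C_{k-1}$ to reach $x$, hence
$u\leadsto x$. Concatenating $u\leadsto x$, $x\leadsto y$, and (using strong
connectivity of $C_k$) a path $y\leadsto v$ inside $C_k$ gives $u\leadsto v$ in
$G$.
\end{proof}
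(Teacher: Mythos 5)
Your proof is correct and follows exactly the paper's (one-line) argument: project a walk to SCC labels and delete repeats for the forward direction, and stitch inter-SCC witnesses together with intra-SCC connectivity for the reverse. The extra bookkeeping of entry/exit vertices is just a fleshed-out version of the same idea.
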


\begin{proof}
Map a path to SCC labels and delete repeats; conversely concatenate inter-SCC
edges with intra-SCC connectivity.
\end{proof}

\begin{conjecture}[$\beta$-catastrophe as emergence of a giant wandering SCC]
\label{conj:beta-catastrophe}
For certain hard discrete families, there exists $\beta_c\in(0,1)$ such that the
order parameter $\Phi_n(\beta)$ from Definition~\ref{def:order-parameter}
undergoes a phase transition at $\beta_c$. For $\beta>\beta_c$, typical
trajectories exhibit long transients supported by the giant SCC of
$G_{\beta,n}\restriction(V_n\setminus S_n)$ before reaching $S_n$, causing rapid
growth (or divergence) of solution times as $\beta\uparrow1$.
\end{conjecture}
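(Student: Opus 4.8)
Since the final statement is a \emph{conjecture}, I outline a program by which one might hope to establish it for a suitable model family, and indicate precisely where rigor currently breaks down.

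\textbf{Step 1: pass to a random-digraph surrogate.} The deterministic itinerary $i_k$ of the orbit $x_{k+1}=T_\beta(x_k)$ is not directly amenable to percolation arguments, so the first move is to lift the problem to the annealed model $G^\omega_{\beta,n}$ of Definition~\ref{def:edge-perc}. To make this legitimate rather than heuristic one posits a probabilistic ensemble for the hard family (e.g.\ constraint sets $A_n,B_n$ drawn from a fixed law on the instances of \S\ref{sec:num_ledm}), so that $G^\omega_{\beta,n}$ becomes a genuine inhomogeneous directed random graph whose limiting edge densities are governed by a kernel $p_\beta(\cdot,\cdot)$ on cell-types in both orientations. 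Under this lift, the phase-transition half of the conjecture reduces to: the rescaled largest strongly connected component of $G^\omega_{\beta,n}\restriction(V_n\setminus S_n)$ jumps from $o(|V_n|)$ to $\Theta(|V_n|)$ as $\beta$ crosses a threshold.

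\textbf{Step 2: locate the threshold via a branching operator.} One then invokes the theory of giant SCCs in inhomogeneous directed random graphs (directed Bollob\'as--Janson--Riordan kernel percolation, or the directed Erd\H{o}s--R\'enyi threshold): $\Phi_n(\beta)$ from Definition~\ref{def:order-parameter} has a positive limit iff the integral operator $\mathcal K_\beta$ built from $p_\beta$ (composed in the two orientations) has spectral radius exceeding $1$, and one sets $\beta_c:=\inf\{\beta: \text{spectral radius of }\mathcal K_\beta>1\}$. To pin $\beta_c\in(0,1)$ one needs: (a) as $\beta\downarrow 0$, $P_\beta(i,j)\to\delta_{ij}$, so edge densities vanish and the spectral radius is below $1$; (b) monotonicity in $\beta$ in the spirit of Lemma~\ref{lem:monotone-coupling}, which first requires showing $P_\beta(i,j)$ is nondecreasing in $\beta$ — plausible only after mesoscopic averaging, so this should be replaced by a monotone surrogate kernel or by a stochastic-domination argument; and (c) a genuine lower bound giving spectral radius $>1$ for $\beta$ near $1$, where the increments $\beta v_i$ are large, transport between cells is strongly mixing, and $S_n$ becomes hard to reach along the percolated digraph. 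Input (c) is the model-specific ingredient and is exactly where the phrase ``certain hard families'' must be made concrete.

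\textbf{Step 3: from a giant SCC to long transients.} Granting $\Phi_n(\beta)\ge c>0$ for $\beta>\beta_c$, one shows that a set of initial conditions of positive $\mu$-measure has itinerary entering the giant component, and that an orbit confined to a strongly connected cell-set of size $\gtrsim|V_n|$ disjoint from $S_n$ cannot reach $S_n$ quickly: a conductance/mixing-time lower bound for the induced substochastic chain yields an expected hitting time of $S_n$ that grows polynomially in $|V_n|$. Converting iteration count to rescaled time via $t=k\beta$ and the Euler interpretation of Theorem~\ref{thm:euler} then gives rapid growth of $t^*_\delta$ as $\beta\uparrow 1$, with outright divergence corresponding to the degenerate regime $\Phi_n\to 1$ in which $S_n$ is unreachable along $G^\omega_{\beta,n}$.

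\textbf{The main obstacle} is Step~1 itself: $P_\beta$ is a coarse one-step transport coefficient under a reference measure, whereas a deterministic trajectory can systematically exploit cell geometry to escape a large SCC far faster than the annealed model predicts — so SCC size controls \emph{worst-case} wandering but not the \emph{typical} fate of a fixed orbit. Closing this gap requires either (i) a genuinely random instance ensemble together with an equidistribution statement transferring orbit statistics to $G^\omega_{\beta,n}$, or (ii) a quantitative comparison showing the symbolic dynamics of $T_\beta$ is well approximated by the Markov chain $P_\beta$ on the relevant time scale; both are substantial. By comparison, establishing the monotonicity of $P_\beta$ in $\beta$ and isolating the hard families are secondary but still open points.
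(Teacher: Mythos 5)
The statement you were asked to prove is labeled a \emph{conjecture}, and the paper in fact offers no proof: the entire subsection \S\ref{sec:beta_catastrophe_model} is explicitly declared heuristic, and the only rigorous content surrounding the conjecture consists of the definitions of $P_\beta$, $\Phi_n$, and $G_\beta^\omega$, the conditional monotone-coupling Lemma~\ref{lem:monotone-coupling}, the condensation Lemma~\ref{lem:condensation-reach}, and the empirical heatmap proxies of \S\ref{sec:num_ledm}. Your decision to present a research program rather than a purported proof is therefore the correct posture, and your program is, if anything, more explicit than anything in the paper: Step~2's reduction of the threshold to the spectral radius of a directed inhomogeneous kernel operator is the natural way to make the paper's citation of percolation/random-graph theory concrete, and your observation that $P_\beta(i,j)\to\delta_{ij}$ as $\beta\downarrow 0$ correctly anchors the subcritical side given Definition~\ref{def:cell-kernel}.

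Two points deserve emphasis. First, the obstacle you isolate in Step~1 --- that $P_\beta$ is an annealed, measure-averaged transport coefficient while the conjecture is about a \emph{deterministic} orbit, so a giant SCC in $G_\beta^\omega$ bounds worst-case wandering but says nothing about the typical fate of a fixed trajectory --- is exactly the gap the paper papers over by calling $P_\beta$ a ``coarse model''; no equidistribution or shadowing statement is offered there, so your identification of this as the central missing ingredient is accurate. Second, you correctly note that the paper's Lemma~\ref{lem:monotone-coupling} is vacuous as stated unless one verifies its hypothesis that $P_\beta(i,j)$ is nondecreasing in $\beta$; this hypothesis manifestly fails on the diagonal (since $P_\beta(i,i)=\mu(W_i\cap(W_i-\beta v_i))/\mu(W_i)$ decreases from $1$ as $\beta$ grows for $v_i\neq 0$), so any monotone coupling must be restricted to off-diagonal edges or replaced by the stochastic-domination surrogate you suggest. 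In short: there is no proof in the paper to compare against, your program is sound as a program, and the gaps you flag (orbit-to-chain transfer, verification of monotonicity, and a supercritical lower bound for specific hard families) are precisely the ones that would have to be closed to promote the conjecture to a theorem.
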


The heatmap diagnostics done in \S\ref{sec:num_ledm} can be viewed as empirical
proxies for (i) accessibility of $S_n$ (entry probability) and (ii) recurrence
within $V_n\setminus S_n$ (mesoscopic recurrence index), both of which are
predicted to change sharply across $\beta_c$ under
Conjecture~\ref{conj:beta-catastrophe}.

\section{Numerical Evidences}
We consider the foundational framework in \cite{elser2021learning}, where the learning problem was framed as feasibility formulation. The training subproblem for a single sample is the feasibility problem
\[
\text{find } z\in \bigcap_{v\in V} C_v.
\] For a mini-batch $\{(x^{(i)},y^{(i)})\}_{i=1}^B$, the framework constructs a single enlarged computation graph by
replicating all \emph{data-dependent} nodes per sample while \emph{sharing} parameter nodes across samples.
Thus, parameter nodes have outgoing edges into every sample replica, and $C_p^{\mathrm{par}}$ enforces cross-sample
consensus automatically. The resulting feasibility problem can be written as
\[
\text{find } z\in
\left(\bigcap_{i=1}^B\ \bigcap_{v\in V_{\mathrm{data}}^{(i)}} C_v^{(i)}\right)
\cap
\left(\bigcap_{p\in V_{\mathrm{par}}} C_p^{\mathrm{par}}\right).
\]
There is a single edge-state $z$ on a single
(batch-expanded) graph; the only batch coupling is through the shared parameter-node constraints. We provide an equivalence interpretation that is useful to the set-valued and variational analysis, operator splitting, and machine learning communities.
\begin{theorem}[Two-level wire feasibility and canonical finite-sum inclusion]
\label{thm:twolevel-wire-inclusion}

Fix $N\in\mathbb{N}$.
Let $\mathcal{V}_{\mathrm{data}}$ be a finite index set of local relation types,
$\mathcal{W}_{\mathrm{act}}$ a finite index set of activation-wire types,
and $\Theta$ a finite index set of shared parameter wires.
For $w\in\mathcal{W}_{\mathrm{act}}$ let $H_w\simeq\R^{d_w}$, and for $\theta\in\Theta$ let
$H_\theta\simeq\R^{d_\theta}$.

\begin{enumerate}[label=\textup{(\roman*)},leftmargin=2.2em]

\item \textbf{(Local-copy wire lift).}
For each replica $i\in[N]$ and each relation $v\in\mathcal{V}_{\mathrm{data}}$, let
$\mathcal{W}(v)\subseteq \mathcal{W}_{\mathrm{act}}\cup\Theta$ and define
\[
H_{i,v} := \prod_{\omega\in\mathcal{W}(v)} H_\omega,
\qquad
M_{i,v}\subseteq H_{i,v}.
\]
Set
\[
\mathcal{H}
:= \prod_{i=1}^N \ \prod_{v\in\mathcal{V}_{\mathrm{data}}} H_{i,v},
\qquad
x=(x_{i,v})\in\mathcal{H}.
\]

\item \textbf{(Set-feasibility formulation).}
Define the product of local relations
\[
\mathcal{M} := \prod_{i=1}^N \ \prod_{v\in\mathcal{V}_{\mathrm{data}}} M_{i,v} \subseteq \mathcal{H}.
\]
For each $(i,w)\in[N]\times\mathcal{W}_{\mathrm{act}}$, let $\Delta_{i,w}$ be the diagonal subspace enforcing
equality of all copies of wire $(i,w)$ across $\{x_{i,v}\}_v$.
For each $\theta\in\Theta$, let $\Delta_\theta$ enforce equality of all copies of $\theta$ across all replicas.
Define
\[
\mathcal{D}_{\mathrm{wire}}
:= \Big(\prod_{i=1}^N \ \prod_{w\in\mathcal{W}_{\mathrm{act}}} \Delta_{i,w}\Big)
   \times
   \Big(\prod_{\theta\in\Theta} \Delta_\theta\Big)
\subseteq \mathcal{H}.
\]
The two-level wire feasibility problem is
\[
\text{find } x\in \mathcal{M}\cap \mathcal{D}_{\mathrm{wire}}.
\]

\item \textbf{(Finite-sum operator inclusion).}
Assume all $M_{i,v}$ are nonempty and closed, and let $N_M$ denote the normal cone of $M$.
Then the feasibility problem in (ii) is equivalent to the inclusion
\begin{equation}
\label{eq:twolevel-wire-inclusion}
0 \in 
\sum_{i=1}^N \ \sum_{v\in\mathcal{V}_{\mathrm{data}}} N_{M_{i,v}}(x)
\;+\;
\sum_{i=1}^N \ \sum_{w\in\mathcal{W}_{\mathrm{act}}} N_{\Delta_{i,w}}(x)
\;+\;
\sum_{\theta\in\Theta} N_{\Delta_{\theta}}(x),
\qquad x\in\mathcal{H},
\end{equation}
where each normal cone acts on its own coordinate block in $\mathcal{H}$ (and as zero elsewhere).
In particular, $J_{N_{M_{i,v}}}=P_{M_{i,v}}$ and $J_{N_{\Delta}}=P_{\Delta}$, where
$P_{\Delta_{i,w}}$ is within-replica averaging and $P_{\Delta_\theta}$ is cross-replica averaging.

\end{enumerate}
\end{theorem}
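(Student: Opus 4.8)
The content is concentrated in part~(iii); parts~(i)--(ii) only fix notation, realizing $\mathcal{H}$ as a finite orthogonal sum of Euclidean factors and $\mathcal{M},\mathcal{D}_{\mathrm{wire}}$ as closed subsets of it. The plan is to (a) collapse the inclusion \eqref{eq:twolevel-wire-inclusion} into the compact form $0\in N_{\mathcal{M}}(x)+N_{\mathcal{D}_{\mathrm{wire}}}(x)$ via the product rule for normal cones, (b) prove its equivalence with the feasibility problem by two elementary facts about normal cones at a point, with no constraint qualification, and (c) verify the resolvent identities and the averaging description of the diagonal projectors by direct computation.

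For (a): each block normal cone $N_{M_{i,v}}$ in \eqref{eq:twolevel-wire-inclusion} is, by the stated convention, the cylinder $N_{M_{i,v}}(x_{i,v})\times\{0\}$ over the remaining coordinates; the Minkowski sum of the cylinders of a family of cones supported on disjoint blocks is their Cartesian product, so with $\mathcal{M}=\prod_{i,v}M_{i,v}$ the product rule $N_{\mathcal{M}}(x)=\prod_{i,v}N_{M_{i,v}}(x_{i,v})$ gives $\sum_{i,v}N_{M_{i,v}}(x)=N_{\mathcal{M}}(x)$. Likewise, since $H_{i,v}=\prod_{\omega\in\mathcal{W}(v)}H_\omega$ and $\mathcal{W}_{\mathrm{act}},\Theta$ are disjoint, the subspaces $\Delta_{i,w}$ and $\Delta_\theta$ are supported on pairwise disjoint coordinate groups and $\mathcal{D}_{\mathrm{wire}}$ is their product, so $\sum_{i,w}N_{\Delta_{i,w}}(x)+\sum_{\theta}N_{\Delta_\theta}(x)=N_{\mathcal{D}_{\mathrm{wire}}}(x)$. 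Hence \eqref{eq:twolevel-wire-inclusion} reads $0\in N_{\mathcal{M}}(x)+N_{\mathcal{D}_{\mathrm{wire}}}(x)$.

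For (b): use, for any nonempty closed $C\subseteq\R^m$ and any of the usual normal cones (proximal, Fr\'echet, or limiting), that $0\in N_C(x)$ whenever $x\in C$ and $N_C(x)=\varnothing$ whenever $x\notin C$. If $x\in\mathcal{M}\cap\mathcal{D}_{\mathrm{wire}}$, then $0\in N_{\mathcal{M}}(x)$ and $0\in N_{\mathcal{D}_{\mathrm{wire}}}(x)$, so $0=0+0\in N_{\mathcal{M}}(x)+N_{\mathcal{D}_{\mathrm{wire}}}(x)$. Conversely, a Minkowski sum is empty as soon as one summand is, so $0\in N_{\mathcal{M}}(x)+N_{\mathcal{D}_{\mathrm{wire}}}(x)$ forces both $N_{\mathcal{M}}(x)$ and $N_{\mathcal{D}_{\mathrm{wire}}}(x)$ to be nonempty, whence $x\in\mathcal{M}$ and $x\in\mathcal{D}_{\mathrm{wire}}$ by the second fact. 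No constraint qualification enters, because we never invoke a sum rule for $N_{\mathcal{M}\cap\mathcal{D}_{\mathrm{wire}}}$; only membership of $0$ in the sum is tested, and the passage from that membership back to feasibility uses merely nonemptiness of each summand.

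For (c): for a closed subspace $\Delta$ one has $N_\Delta(x)=\Delta^\perp$ if $x\in\Delta$ and $\varnothing$ otherwise, so $J_{N_\Delta}=(\Id+N_\Delta)^{-1}=P_\Delta$, the orthogonal projector; when $\Delta$ is the diagonal $\{(u,\dots,u):u\in H\}$ inside an $\ell$-fold product of a Hilbert space $H$, the residual of $(x_1,\dots,x_\ell)\mapsto(\bar x,\dots,\bar x)$ with $\bar x=\ell^{-1}\sum_j x_j$ has components $x_j-\bar x$ summing to $0$ and is therefore orthogonal to $\Delta$, so this map is $P_\Delta$. Taking the copies of wire $(i,w)$ to be the blocks $x_{i,v}$ with $w\in\mathcal{W}(v)$ inside a fixed replica $i$ identifies $P_{\Delta_{i,w}}$ with within-replica averaging of the $w$-components; taking the copies of $\theta$ to be the blocks $x_{i,v}$ with $\theta\in\mathcal{W}(v)$ over all replicas identifies $P_{\Delta_\theta}$ with cross-replica averaging. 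For a local relation $M_{i,v}$, $x\in J_{N_{M_{i,v}}}(y)\iff y-x\in N_{M_{i,v}}(x)$; since any nearest point satisfies $y-x\in N_{M_{i,v}}(x)$, one always has $P_{M_{i,v}}(y)\subseteq J_{N_{M_{i,v}}}(y)$, with equality (Moreau's identity) when $M_{i,v}$ is convex and, more generally, on the region where $M_{i,v}$ is prox-regular and $P_{M_{i,v}}$ is single-valued. I expect this last point -- the coincidence of $J_{N_C}$ with $P_C$ for genuinely nonconvex $C$ -- to be the main obstacle: for a finite or non-prox-regular set the resolvent of the limiting normal cone is strictly larger than the nearest-point map (e.g.\ for a finite set every normal cone is all of $\R^m$), so the identity $J_{N_{M_{i,v}}}=P_{M_{i,v}}$ should be read either under a prox-regularity hypothesis on the $M_{i,v}$ or as the defining normalization of the splitting iteration. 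The remaining computations are routine.
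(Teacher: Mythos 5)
The paper states this theorem without a proof, so there is no in-text argument to compare yours against; your proposal supplies the missing proof and is correct. The two pillars are exactly what is needed: (a) the sum of normal cones supported on pairwise disjoint coordinate groups is the product normal cone, so the block inclusion collapses to $0\in N_{\mathcal{M}}(x)+N_{\mathcal{D}_{\mathrm{wire}}}(x)$ (using the product rule $N_{C_1\times\cdots\times C_k}(x)=N_{C_1}(x_1)\times\cdots\times N_{C_k}(x_k)$, valid for limiting normal cones of closed sets); and (b) the equivalence with feasibility follows from the two elementary facts $0\in N_C(x)$ for $x\in C$ and $N_C(x)=\varnothing$ for $x\notin C$, together with the observation that a Minkowski sum is empty once any summand is. You are right that no constraint qualification is needed, precisely because no intersection rule for $N_{\mathcal{M}\cap\mathcal{D}_{\mathrm{wire}}}$ is ever invoked. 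Your caveat on the final clause is also well placed and worth recording: $J_{N_\Delta}=P_\Delta$ is an exact computation for a closed subspace, but for a genuinely nonconvex closed $M_{i,v}$ one only has $P_{M_{i,v}}\subseteq J_{N_{M_{i,v}}}$ in general (for a finite set the limiting normal cone at each of its points is the whole space, so $J_{N_M}(y)=M$ for every $y$), with equality requiring convexity or local prox-regularity. That is a gap in the theorem's ``in particular'' claim as literally stated --- it should carry a prox-regularity hypothesis or be read as the defining normalization of the splitting scheme --- not a gap in your argument.
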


\begin{corollary}[One-layer special case]
\label{cor:one-layer}

In the one-layer case, the index set of local relations can be taken as a Cartesian grid
$\mathcal{V}_{\mathrm{data}}=[n]$ and replicas as $[N]=[m]$, so that the local constraints are naturally indexed
by pairs $(i,v)\in[m]\times[n]$.
Then Theorem~\ref{thm:twolevel-wire-inclusion}, yields a feasibility model consisting of $mn$ local relation blocks
together with diagonal (consensus) constraints enforcing equality of replicated variables \cite{lal2023nonconvex}. 
\end{corollary}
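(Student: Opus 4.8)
The plan is to obtain the statement as a direct specialization of Theorem~\ref{thm:twolevel-wire-inclusion}, so the work is entirely in fixing the index dictionary and checking the one hypothesis of part~(iii). First I would set the replica count $N:=m$ and take $\mathcal{V}_{\mathrm{data}}:=[n]$, so that the local relation blocks are indexed by pairs $(i,v)\in[m]\times[n]$; this already produces exactly $|[N]\times\mathcal{V}_{\mathrm{data}}|=mn$ local blocks. For the wire data I would let $\mathcal{W}_{\mathrm{act}}$ be the per-sample activation and pre-activation wires of the single layer and $\Theta$ the shared weight and bias wires, with $\mathcal{W}(v)\subseteq\mathcal{W}_{\mathrm{act}}\cup\Theta$ the wires incident to local relation $v$ in the one-layer computation graph of \cite{lal2023nonconvex}; each $M_{i,v}\subseteq H_{i,v}$ is then the corresponding single-node relation (an affine constraint, or the graph of the layer's activation restricted to its incident wires).

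Next I would verify the hypothesis of Theorem~\ref{thm:twolevel-wire-inclusion}(iii): each $M_{i,v}$ is nonempty and closed. Affine relations are closed, and nonempty because the computation graph is consistently instantiable; graphs of continuous activations are closed and nonempty since the activation is defined on all of its input wire. With this in hand, part~(iii) applies verbatim: the two-level wire feasibility problem $x\in\mathcal{M}\cap\mathcal{D}_{\mathrm{wire}}$ is equivalent to the finite-sum inclusion~\eqref{eq:twolevel-wire-inclusion}, which here is a sum of $mn$ normal-cone terms $N_{M_{i,v}}$ indexed by $(i,v)\in[m]\times[n]$, together with the within-replica diagonal terms $N_{\Delta_{i,w}}$ and the cross-replica consensus terms $N_{\Delta_\theta}$. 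The resolvent identifications from the theorem then read off immediately: $J_{N_{M_{i,v}}}=P_{M_{i,v}}$, while $J_{N_{\Delta_{i,w}}}=P_{\Delta_{i,w}}$ is within-replica averaging of the copies of wire $(i,w)$ and $J_{N_{\Delta_\theta}}=P_{\Delta_\theta}$ is cross-replica averaging of the copies of parameter wire $\theta$ — i.e.\ exactly the ``$mn$ local relation blocks together with diagonal (consensus) constraints enforcing equality of replicated variables'' asserted in the statement.

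The only non-clerical step, and hence the place where care is needed rather than estimation, is the identification itself: matching the abstract replica/wire bookkeeping of Theorem~\ref{thm:twolevel-wire-inclusion} with the concrete one-layer feasibility graph of \cite{lal2023nonconvex} and confirming that its node relations indeed fall under the nonempty-closed assumption. Once that dictionary is fixed, no further analysis is required and the corollary follows.
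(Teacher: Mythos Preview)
Your proposal is correct and is essentially the same approach as the paper's: the paper gives no separate proof of the corollary, treating it as an immediate specialization of Theorem~\ref{thm:twolevel-wire-inclusion} once one sets $N=m$ and $\mathcal{V}_{\mathrm{data}}=[n]$. Your write-up simply makes that specialization explicit and checks the nonempty/closed hypothesis, which is exactly what is needed.
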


\subsection{Numerical evidence on one-layer LEDM instance}
\label{sec:num_ledm}

\begin{figure}[t]
\centering
\begin{subfigure}[t]{0.48\textwidth}
  \centering
  \includegraphics[width=\textwidth]{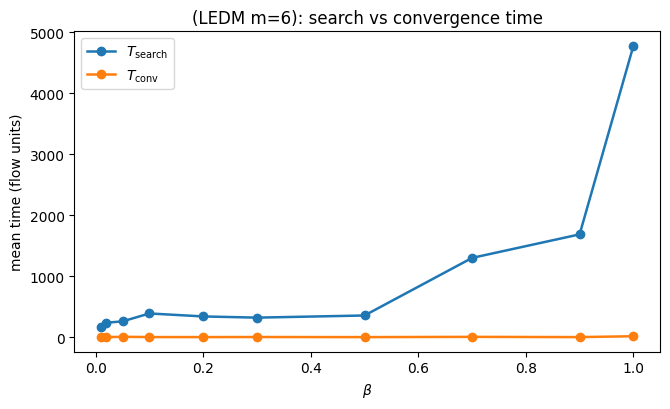}
  \caption{Search and convergence times $T_{\mathrm{search}}(\beta)$ and
  $T_{\mathrm{conv}}(\beta)$.}
\end{subfigure}\hfill
\begin{subfigure}[t]{0.48\textwidth}
  \centering
  \includegraphics[width=\textwidth]{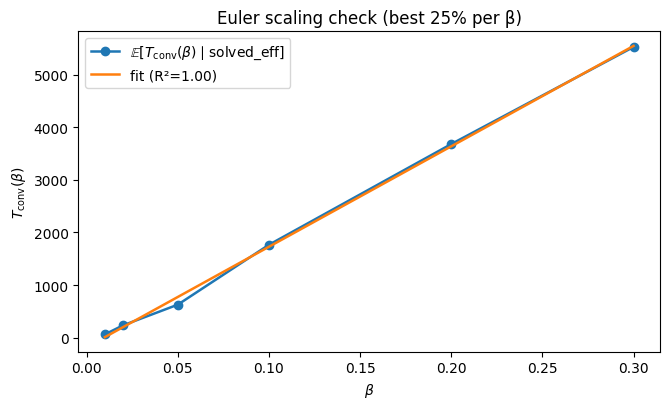}
  \caption{Euler scaling of
  $\mathbb{E}[T_{\mathrm{conv}}(\beta)\mid\mathrm{solved}]$ for $\beta\le 0.3$.}
\end{subfigure}
\caption{Local versus mesoscopic time scales in LEDM.}
\label{fig:exp2-exp3}
\end{figure}
LEDM (Low-rank Euclidean Distance Matrix) is a one-layer nonnegative matrix
factorization feasibility problem in which a symmetric matrix
$Y\in\mathbb{R}^{m\times m}$ with entries
$Y_{ij}=\bigl((i-j)/(m-1)\bigr)^2$ is factorized as $Y\approx WZ^\top$ with
$W,Z\ge 0$ and fixed rank. This fits the one-layer special case of
Corollary~\ref{cor:one-layer}, and hence the two-level wire feasibility
formulation of Theorem~\ref{thm:twolevel-wire-inclusion}.
We run the RRR iteration with hyperparameters fixed as in \cite{elser2021learning}:
batch size $B=m$, rank $r_+=m-1$, normalization $\Omega\in[0.6,0.9]$ depending on
$m$, and iteration budgets up to $k_{\max}=2\times 10^4$ to $1.2\times 10^5$.
The observable is the reconstruction residual
$\mathrm{err}(x_k)$. All quantities are averaged
over multiple random initializations. The experiments are intended to
characterize dynamical regimes rather than to benchmark performance.

\paragraph{Two-phase decomposition.}
Fix tolerances $0<\delta_{\mathrm{solve}}<\delta_{\mathrm{enter}}$ with
$\delta_{\mathrm{enter}}=10^{-2}$ and $\delta_{\mathrm{solve}}=3\times 10^{-2}$.
Define the discrete hitting indices
\[
k_{\mathrm{enter}}(\beta):=\inf\menge{k\ge 0}{\mathrm{err}(x_k)\le\delta_{\mathrm{enter}}},
\qquad
k_{\mathrm{solve}}(\beta):=\inf\menge{k\ge 0}{ \mathrm{err}(x_k)\le\delta_{\mathrm{solve}}},
\]
and the corresponding flow-time quantities
\[
T_{\mathrm{search}}(\beta):=\beta\,k_{\mathrm{enter}}(\beta),
\qquad
T_{\mathrm{conv}}(\beta):=\beta\bigl(k_{\mathrm{solve}}(\beta)-k_{\mathrm{enter}}(\beta)\bigr)_+ .
\]
Figure~\ref{fig:exp2-exp3} (left) reports empirical means of
$T_{\mathrm{search}}(\beta)$ and $T_{\mathrm{conv}}(\beta)$.
The data show that $T_{\mathrm{search}}$ dominates the total runtime across all
$\beta$, while $T_{\mathrm{conv}}$ remains comparatively small whenever entry
occurs. This directly reflects the locality of the flow analysis in
\S\ref{sec:beyond-local}: Theorems~\ref{thm:smooth}-\ref{thm:euler} govern only
the post-entry regime.

\paragraph{Euler scaling in the regular regime.}
Conditioning on successful entry/solve, we restrict to a small-$\beta$ window
$\beta\le 0.3$ and fit the conditional mean convergence time
\[
\mathbb{E}\!\left[T_{\mathrm{conv}}(\beta)\mid \text{solved}\right]
\;\approx\; a+b\,\beta .
\]
Figure~\ref{fig:exp2-exp3} (right) displays the fit, with coefficient of
determination $R^2\approx 1$. This linear scaling is consistent with the
first-order Euler discretization of the limiting flow
(Theorem~\ref{thm:euler}) and the local contraction properties established in
Theorem~\ref{thm:smooth} and Theorem~\ref{thm:disc-lyap}.

\paragraph{Non-local diagnostics over problem size.}
To characterize dynamics prior to entry, for each $(m,\beta)$ define the entry
probability
\[
p_{\mathrm{enter}}(m,\beta)
:=\mathbb{P}\!\left(k_{\mathrm{enter}}(\beta)<k_{\max}\right),
\]
estimated empirically under a fixed iteration budget $k_{\max}$.
Figure~\ref{fig:exp6-heatmaps} (left) displays $p_{\mathrm{enter}}(m,\beta)$.

To quantify non-local wandering before entry, log the error trace
$\{\mathrm{err}(x_k)\}_{k=1}^T$ and define a coarse state
\[
s_k:=\pi(\mathrm{err}(x_k))\in\{0,\dots,B-1\},\qquad
\pi(r):=\mathrm{bin}\bigl(\log_{10} r\bigr),
\]
using quantile binning. The mesoscopic recurrence index is
\[
R(m,\beta)
:=\mathbb{E}\!\left[\frac{1}{T-w}\sum_{k=w+1}^{T}
\mathbf{1}\{s_k\in\{s_1,\dots,s_{k-1}\}\}\right],
\]
with fixed burn-in $w$. Figure~\ref{fig:exp6-heatmaps} (right) displays
$R(m,\beta)$. Larger values indicate increased recurrent or mixing behavior
prior to entry, complementing the local flow description.

\begin{figure}[t]
\centering
\begin{subfigure}[t]{0.5\textwidth}
  \centering
  \includegraphics[width=\textwidth]{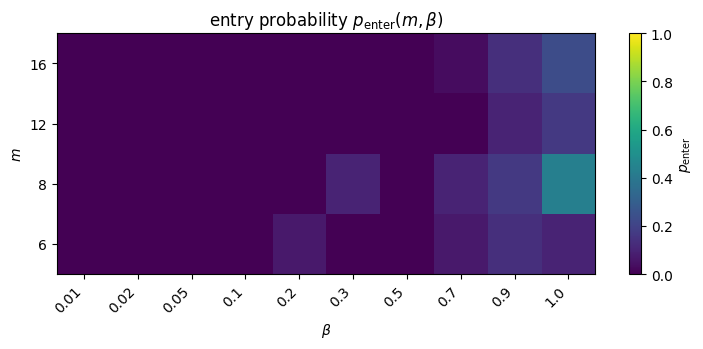}
  \caption{Entry probability $p_{\mathrm{enter}}(m,\beta)$.}
\end{subfigure}\hfill
\begin{subfigure}[t]{0.5\textwidth}
  \centering
  \includegraphics[width=\textwidth]{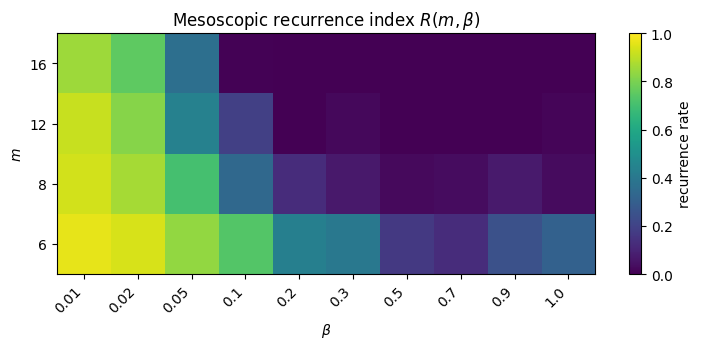}
  \caption{Mesoscopic recurrence index $R(m,\beta)$.}
\end{subfigure}
\caption{Non-local diagnostics across problem size and step size.}
\label{fig:exp6-heatmaps}
\end{figure}

Thus, figures~\ref{fig:exp2-exp3} and \ref{fig:exp6-heatmaps} jointly suggests that:
local flow-limit theory accurately predicts $T_{\mathrm{conv}}$ once entry
occurs, while global performance and the $\beta\to 1$ deterioration are governed
by non-local, instance-dependent entry dynamics, as anticipated in
\S\ref{sec:beta_catastrophe_model}.

\section{Conclusion and outlook}\label{sec:conclusion}

This paper develops a rigorous dynamical-systems description of the
Reflect-Reflect-Relax (RRR) algorithm in the small-step regime, centered on
the limiting flow $\dot x=v(x)$ governing the behavior of RRR as
$\beta\downarrow0$.

In the smooth transversal setting \textnormal{(S1)}, each feasible intersection
$x^*\in A\cap B$ is shown to be a hyperbolic sink of the flow, with strictly
negative real spectrum of $Dv(x^*)$ on the transverse subspace and exponential
decay of the gap $\gap(x)=\|v(x)\|$ (Theorem~\ref{thm:smooth}). Under uniform
geometric assumptions, this local structure extends to a positively invariant
tubular neighborhood $\mathcal T_r$ of the feasible manifold, where
$E(x)=\tfrac12\|v(x)\|^2$ is a strict Lyapunov function and no recurrent or
chaotic dynamics occur (Theorem~\ref{thm:tube-Lyap},
Corollary~\ref{cor:no-recurrence}).

In the discrete regime \textnormal{(S2)}, the limiting vector field is
piecewise constant on $W$-domains. We showed that convergent two-cell
boundaries support Filippov sliding with explicit sliding velocity, and that,
under genericity assumptions, finite descent chains of $W$-domains lead to
finite-time capture into a solution cell (Theorem~\ref{thm:discrete}).

The continuous and discrete descriptions are connected by proving that
small-step RRR is a forward-Euler discretization of the flow with
$O(\beta)$ trajectory error on finite time intervals, including across sliding
segments (Theorem~\ref{thm:euler}). As a consequence, hitting times of any fixed
gap threshold $\delta>0$ satisfy
\[
t^*_\delta(\beta;x_0)\;\to\;T^*_\delta(x_0)\quad\text{as }\beta\downarrow0,
\]
while the iteration count diverges like $1/\beta$
(Theorems~\ref{thm:hitting} and~\ref{thm:quant}). A discrete Lyapunov inequality
further yields exponential convergence rates for the iterates consistent with
the flow analysis (Theorem~\ref{thm:disc-lyap}).

Numerical experiments on a one-layer LEDM instance support this picture
(\S\ref{sec:num_ledm}), showing that local convergence follows the predicted
Euler scaling once a regular regime is reached, while overall runtime is
dominated by a distinct mesoscopic entry phase whose dependence on $\beta$ is
non-local and instance-dependent. This separation clarifies both the scope of
the flow-limit theory and the origin of the deterioration observed near the
Douglas-Rachford limit $\beta\uparrow1$.

\paragraph{Outlook.} Extending the analysis to global phase portraits of the RRR flow and map,
particularly in the infeasible case $A\cap B=\varnothing$ where numerical
evidence suggests complex invariant sets, chaotic attractors, and long
transients, remains open. In the discrete setting, while generic finite descent
chains were established, a quantitative theory relating geometric properties of
the $W$-domain adjacency graph (connectivity, diameter, expansion, recurrent
classes) to entry times, mixing times, coupling times, or hitting-time
distributions is largely undeveloped. Moreover, numerical evidence indicates
that the regime $\beta\uparrow1$ is governed by non-local,
instance-dependent wandering and metastability not captured by local flow
analysis; developing rigorous criteria for such $\beta$-driven transitions and
relating them to coarse-grained or renormalized descriptions of the induced cell
dynamics is an important direction for future work.

\bibliographystyle{plain}
\bibliography{ref}

\end{document}